\documentclass[12pt]{article}
\usepackage{amsmath, amsthm, amsfonts}
\usepackage[T2A]{fontenc}
\usepackage[utf8]{inputenc}
\usepackage{ragged2e}
\usepackage{mathtools}
\usepackage{ulem}
\usepackage{lipsum}
\usepackage[top=1in, bottom=1in, left=1in, right=1in]{geometry}
\usepackage{microtype}
\usepackage{amssymb}
\usepackage[hidelinks]{hyperref}
\usepackage{listings}

\usepackage{amsthm}

\newtheorem{theorem}{Theorem}[section]
\newtheorem{lemma}[theorem]{Lemma}

\newtheorem{corollary}[theorem]{Corollary}
\newtheorem{definition}[theorem]{Definition}

\begin{document}

\vspace{100pt}

\begin{center}
    \textbf{\LARGE Gaps, isolated points, and the origin of the ray in the one-dimensional Dirichlet spectrum}
\end{center}

\begin{center}
    \textbf{\Large Maksim Lavrov}\\[4pt]
    Faculty of Mechanics and Mathematics,\\
    Lomonosov Moscow State University
\end{center}

\begin{abstract}
    
We study the one-dimensional Dirichlet spectrum and improve several classical results of Ivanov. First, we obtain a new upper bound for the origin \(\mu\) of the ray contained in the spectrum,
\[
\mu\leq \frac{448875+1611\sqrt{67773}}{46982}
       =18.4809\ldots.
\]
We also exhibit an explicit gap \((L,R)\) in the Dirichlet spectrum, which yields the improved lower bound
\[
\mu\geq R=5.9596\ldots.
\]
The proof of the latter result includes an exact computer-assisted finite search over admissible continued-fraction words. We further prove that all points \(Z_k\), \(k\geq2\), introduced by Ivanov are isolated points of the Dirichlet spectrum. We determine the asymptotic behavior of the neighboring gaps around \(Z_k\) and describe the equivalence classes of all irrational numbers whose Dirichlet value is \(Z_k\).

\end{abstract}

\section{Introduction}

Let $\alpha\in\mathbb{R}\setminus\mathbb{Q}$, and let $[a_0;a_1,a_2,\dots]$ - be its continued fraction expansion. We define
\[
d_n(\alpha):=[a_n;a_{n-1},\dots,a_1][a_{n+1};a_{n+2},\dots],
\]
\[
d(\alpha):=\limsup_{n \to \infty} d_n(\alpha).
\]
The set 
\[
\mathcal{D}:=\{d(\alpha)\ |\ \alpha\in\mathbb{R}\setminus\mathbb{Q}\}
\]
is called the Dirichlet spectrum.

Many of its properties were discovered by Ivanov (see \cite{Ivanov1982}, \cite{Ivanov1978}) and have seen little to no improvement since the 1980's. In the present paper we build upon his work and improve his results.

\section{Main results}

\subsection{Upper bound of the origin of the ray}

In article \cite{DivisNovak1971} Divis and Novak prove the existence of a number $R$, such that 
\[
[R,+\infty]\subset\mathcal{D}.
\]
Let
\[
\mu:=\inf\{R\ | \ [R,+\infty]\subset\mathcal{D}\}.
\]
Divis and Novak achieved the following upper bound
\[
\mu\leq12+8\sqrt{2}=23.3136\dots
\]
Later Ivanov improved their result. In \cite{Ivanov1982} he showed that
\[
\mu\leq10+6\sqrt{2}=18.4852\dots
\]
For decades this stayed as the best upper bound. Recently in \cite{PengWangWang2026} it was proven that
\[
\mu\leq\frac{19425+111\sqrt{26565}}{2030}=18.4811\dots
\]
One of our goals is to improve this result by proving the following theorem.
\begin{theorem}\label{ray upper bound}
    \begin{equation}
        \mu\leq\frac{448875+1611\sqrt{67773}}{46982}=18.4809\dots
    \end{equation}
\end{theorem}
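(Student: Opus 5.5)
The plan follows the Ivanov / Divis--Novak approach: realize every $x\ge\mu_0:=\frac{448875+1611\sqrt{67773}}{46982}$ as $d(\alpha)$ by building $\alpha$ from blocks whose Dirichlet values cover intervals, using sums of Cantor sets.

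\textbf{Step 1 (the Hall-type lemma).} Recall that for $F(M)=\{[0;a_1,a_2,\dots]: a_i\le M\}$ and similar sets with prescribed prefix patterns, Hall's criterion states that if two Cantor sets have thickness product at least $1$ and neither lies in a gap of the other, then their sum $C_1+C_2$ is an interval. Here $d_n(\alpha)=(a_n+[0;a_{n-1},\dots])(a_{n+1}+[0;a_{n+2},\dots])$. Fix the central pair $(a_n,a_{n+1})=(A,B)$. Then $d_n=(A+x)(B+y)$ with $x,y$ ranging over Cantor sets $K_1,K_2$ of continued fractions with bounded, constrained partial quotients. Taking logarithms, $\log(A+x)+\log(B+y)$ ranges over a sum of two Cantor sets. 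Their thicknesses are preserved up to the distortion of $\log$, and this distortion can be controlled by Ivanov's estimates. So the values of the central product fill an interval $I_{A,B}=[(A+\min K_1)(B+\min K_2),(A+\max K_1)(B+\max K_2)]$.

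\textbf{Step 2 (limsup control).} For each central pattern we must also make sure that every other index $m$ gives $d_m(\alpha)$ at most the target value asymptotically. The construction repeats the chosen central block infinitely often, with longer and longer tails approximating $x,y$, and separates the copies by filler words with small quotients. The admissible alphabets for $K_1,K_2$ are therefore restricted: the partial quotients must be small enough that no off-center $d_m$ exceeds $(A+x)(B+y)$. These restrictions reduce thickness, which is exactly the tension that fixes the constant.

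\textbf{Step 3 (covering the ray).} We need $\bigcup_{(A,B)} I_{A,B}\supseteq[\mu_0,\infty)$, using consecutive pairs such as $(A,A)$ and $(A,A+1)$ for large $A$, which overlap trivially. The critical region is near $18.48$, where earlier work used patterns with $A,B\in\{3,4,5\}$ and quotients at most $4$ or so. The improvement comes from choosing refined prefix-constrained Cantor sets, for instance forcing specific first digits of $x$, so that the lower endpoint of the relevant interval drops. The quadratic surd $\mu_0$ should then appear as $(A+\xi)(B+\eta)$ with $\xi,\eta$ eventually periodic continued fractions. These are the minimal elements of the constrained sets, and the numbers $67773$ and $46982$ come from multiplying the two surds. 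I would identify the periodic words by solving for $\xi,\eta$ at that endpoint, then check the thickness condition $\tau(K_1)\tau(K_2)\ge1$ after the log-distortion, and check that the neighboring interval covers from that point upward.

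\textbf{Main obstacle.} The hardest part is the thickness verification near the threshold together with the off-center constraint. Restricting digits to keep other $d_m$ small lowers thickness, and the margin is tiny: the gain is $18.4811\to18.4809$. I would first try gap-by-gap thickness bounds computed from the bridge and gap lengths of the continued-fraction Cantor sets via the standard formulas for $[0;a_1,\dots,a_k,\cdot]$ intervals, verified in exact arithmetic. If the product falls short, I would replace plain thickness by Astels' normalized-thickness criterion for sums of Cantor sets.
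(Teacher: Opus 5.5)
Your outline has the right general shape: Hall's theorem applied to logarithms of a Cantor set of tails, plus a limsup construction. However, it never supplies the idea that produces this particular constant, and your guess about where the constant comes from points the wrong way. In the paper, $\mu_0=M_4M_3$ with $M_4=[\overline{4;1,4,1,3,1,3,1}]$ and $M_3=[3;\overline{1,4,1,4,1,3,1,3}]$. These are the \emph{maxima} of continued fractions with partial quotients in $\{1,2,3,4\}$ that avoid $\mathcal{Q}=\{41414,4131414,4141314\}$, with leading digit $4$ and at most $3$ respectively. So $\mu_0$ is the supremum of the \emph{off-center} values $d_m(\xi)$ at positions with $(z_m,z_{m+1})=(3,4)$ or $(4,3)$. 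It is not the lower endpoint of a product interval built from minimal elements, as your Step 3 assumes. The product interval actually used is $F_1\cdot F_1=[L_F^2,U_F^2]$ with $L_F^2\approx 18.16<\mu_0$. Making the lower endpoint of $I_{A,B}$ drop therefore gains nothing, because the binding constraint is your Step 2. Thickness is only a side condition here, not what determines $\mu_0$.

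The missing step is the choice of the constrained set. Take central pair $(4,4)$ with only the restriction that $44$ occurs nowhere else. Then the off-center bound is $[4;\overline{1,4}]\,[3;\overline{1,4}]=10+6\sqrt2$, which is exactly the previous bound. The improvement comes from also forbidding $41414$, $4131414$ and $4141314$. This set is closed under reversal, and its words contain neither $44$ nor $2$. Hence no forbidden word is created when the reversed $X$ is glued to $Y$ in $S_i=(x_i,\dots,x_0,y_0,\dots,y_i)$, or when blocks are separated by the digit $2$. Every off-center $d_m$ is then asymptotically at most $M_4M_3$. One must still check that $F_1$, the set of $[4;3,a_2,\dots]$ avoiding $44$ and these words, satisfies $\tau(\ln F_1)>1$. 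The tight case is suffix $41$, $r=3$: its gap ratio is below $989/1000$ and stays below $1$ after the log-distortion factor $1025/1024$, which uses $q_n\ge 16$. Your proposal leaves the alphabet, the forbidden words and the thickness check unspecified, and it aims at the wrong extremal elements, so it cannot reach $\mu_0$. Note also that you need not cover the whole ray. Ivanov's bound $\mu\le 10+6\sqrt2$ reduces the task to $[\mu_0,10+6\sqrt2]\subset[L_F^2,U_F^2]$, which the single central pair $(4,4)$ handles.
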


\subsection{Lower bound of the origin of the ray}

In \cite{Ivanov1982} Ivanov proved that
\[
\mu\geq\frac{5+3\sqrt{5}}{2}=5.8541\dots
\]
The goal of this section will be to improve this bound. In fact, not only will we improve it, we will also provide a new empty interval in the Dirichlet spectrum. Define
\[
L:=7-\frac{8\sqrt{21}}{35}=5.9525\dots
\]
\[
R:=\frac{381780528649+35642709\sqrt{21}}{64088662090}=5.9596\dots
\]
We will prove the following theorem.
\begin{theorem}\label{ray lower bound}
    \[
    \mathcal{D}\cap(L,R)=\emptyset.
    \]
    In particular,
    \[
    \mu\geq R.
    \]
\end{theorem}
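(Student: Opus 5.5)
We will prove Theorem \ref{ray lower bound} by the standard argument for gaps in Lagrange/Dirichlet-type spectra: we show that a sequence $(a_n)$ with $d(\alpha)$ in $(L,R)$ has to contain certain forbidden local patterns infinitely often, and each pattern forces either $d(\alpha)\le L$ or $d(\alpha)\ge R$. The second claim, $\mu\ge R$, then follows immediately, because $(L,R)$ is a nonempty interval missing from $\mathcal D$ and lying below $R$.

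\textbf{Step 1: reduction to small partial quotients.} Recall that $d_n(\alpha)=[a_n;a_{n-1},\dots,a_1]\,[a_{n+1};a_{n+2},\dots]$. The first factor is at least $a_n$ and the second is at least $a_{n+1}$, and both exceed $1$. Suppose $a_n\ge 7$, or $a_n\ge 6$ occurs at a position where the neighbouring product is large, infinitely often. Then $d(\alpha)\ge R$. This gives an a priori bound $a_n\le 6$ for all large $n$. We then shrink the alphabet further with crude interval estimates: $[a;\dots]\in(a,a+1)$, and $[a_{n+1};\dots]\ge a_{n+1}+1/(a_{n+2}+1)$. For example, two consecutive large entries $a_n,a_{n+1}\ge 3$ already give $d\ge R$. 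Large entries followed by $1$'s can still produce values of order $6$, so the remaining candidates are words over $\{1,\dots,5\}$ of a restricted shape.

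\textbf{Step 2: identify the extremal configurations.} Next we find which periodic or eventually periodic patterns produce the endpoints. The endpoint $L=7-8\sqrt{21}/35$ involves $\sqrt{21}$, which points to a period such as $(1,4)$ or $(4,1)$ (since $[\overline{1,4}]$ has discriminant $4^2+4\cdot 4=32$, that is not quite it), or the period $(1,3)$ or $(3,1)$, where $\sqrt{3\cdot 7}=\sqrt{21}$ arises from $[\overline{3;1}]=(3+\sqrt{21})/2$. So we expect that $L$ is $d$ of some $\alpha$ whose tail is $\overline{3,1}$ with a specific maximizing position. The messy number $R$ is then $d$ of a sequence in which this periodic word is perturbed once, at a specific long prefix, which is why $R$ has large coefficients. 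Before the case analysis, we compute exactly which word $W$ and which index attain $L$ and $R$.

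\textbf{Step 3: finite combinatorial search.} Fix an $\alpha$ with $d(\alpha)<R$. For large $n$, every window around a position with $d_n$ close to $d(\alpha)$ must lie in a finite admissible set. We show this by branching on the partial quotients to either side of a maximal entry. At each branch we bound $d_n$ using the extreme continued fractions: the tails $\overline{1,k}$ give the largest and smallest completions under the alternating order. We discard a branch as soon as its interval lies entirely $\le L$ or $\ge R$. This is the exact computer-assisted search over admissible words mentioned in the abstract. It is carried out with rational interval arithmetic up to a fixed length, and termination relies on the contraction of cylinder intervals. The outcome we aim for is this: if the sequence deviates from the pattern $\overline{3,1}$ (suitably aligned) infinitely often, then $d\ge R$; otherwise the tail is eventually $\overline{3,1}$-type and $d\le L$.

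\textbf{Main obstacle.} The hard step is Step 3, specifically around the deviations from the periodic word. A single deviation at distance $k$ from the maximizing position changes $d_n$ by roughly $\lambda^{-2k}$. Because of this we have to show two things at once. First, that every admissible deviation pattern pushes $d$ up to at least the precise value $R$, and not merely close to $L$. Second, that the infimum over all deviation placements is attained exactly at the configuration defining $R$. We would handle this by proving a monotonicity lemma in the alternating lexicographic order: moving the deviation farther away decreases $d_n$ while the value stays above $R$, with the limit case checked explicitly. Only the remaining finitely many short configurations are left to the exact search.
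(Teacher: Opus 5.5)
\textbf{There is a genuine gap.} Your outline follows the generic template for gap proofs, but the two places where this particular gap is actually decided are either wrong or missing.

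First, the dichotomy you aim for in Step 3 is false, and so is the heuristic behind Step 2. A sequence with eventually periodic tail $\overline{3,1}$ has $d=[3;\overline{1,3}]\,[1;\overline{3,1}]=(5+\sqrt{21})/2\approx 4.79$, far below $L$, so $L$ does not come from such a tail. Conversely, $[\overline{(1)_{2k-1},3}]$ (value $Z_k$) and $[\overline{2}]$ (value $3+2\sqrt2$) deviate from $\overline{3,1}$ infinitely often and still have $d<L$. What one can prove is a \emph{local} statement:
\begin{itemize}
\item The paper first reduces to $\mathcal{D}(3,1)$ via Ivanov's description of $\mathcal{D}$ on $(3+\sqrt3,(9+2\sqrt{21})/3)$. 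This also disposes of the isolated points $e_n$ from $M(4,1)$, since $e_0<L$ and $e_1>R$. Your Step 1 stops at the alphabet $\{1,\dots,5\}$ and never handles the digit $4$.
\item A compactness argument then gives $\mathbf{a}\in\Sigma$ with $\lambda_0\in(L,R)$ and $\lambda_j\le\lambda_0$ for all $j$.
\item The exact search shows that the central window of length $34$ of such an $\mathbf{a}$ contains $C$ or $C^*$.
\end{itemize}

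Second, and more seriously, the search cannot terminate by ``contraction of cylinder intervals'' at this value of $R$. Take $s$ to be the position where $C$ starts, and consider $\dots,3,1,3,1,C,3,1,3,1,\dots$, i.e.\ tails $x=m_1$, $y=M_3$. Since $c_0\dots c_{24}$ is a palindrome, this sequence is mirror-symmetric about $c_{12}$. It has $\lambda_{s+7}=\lambda_{s+16}=R$ exactly, and all other $\lambda_j<R$. Hence its windows survive at every length, and no amount of computation empties the list.

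The missing idea is a coupling of these two positions. Write $\lambda_{s+7}=A_7(x)B_7(y)$ and $\lambda_{s+16}=A_{16}(x)B_{16}(y)$. The first is increasing in both tails and the second is decreasing in both, so no single-position interval bound reaches $R$. The paper instead bounds the product $\lambda_{s+7}\lambda_{s+16}=P(x)Q(y)$, where $P$ is increasing and $Q$ is decreasing. Using $x\ge m_1$ and $y\le M_3$, this gives $\max\{\lambda_{s+7},\lambda_{s+16}\}\ge\sqrt{P(m_1)Q(M_3)}=R$; this is exactly how $R$ is defined.

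Your proposed monotonicity lemma (``moving the deviation farther away decreases $d_n$ while staying above $R$'') aims at a difficulty that is not there. Pushing a perturbation away inside a $\overline{3,1}$ background drives the values toward $4.79$, not above $R$, and it does not address the two-position trade-off that actually has to be resolved.
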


\subsection{Isolated points in Dirichlet spectrum}

In ~\cite{Ivanov1982} Ivanov introduces the set $M(a), a\in\mathbb{N}$, which is the set consisting of all irrational numbers $x=[x_0;x_1,x_2,\dots]$, such that
\[
\limsup_{n\to\infty}x_n=a.
\]
And its subset $M(a,b), a\geq b\geq1$, such that the block $a,b$ or $b,a$ appears infinitely many times in the sequence $\{x_n\}$, while the blocks $a,c$ or $c,a$ where $c\geq b+1$ appear only finitely many times. He then introduces certain subsets $M_{k,\omega}$ of $M(3,1)$, where $k\in\mathbb{N}$, and $\omega\in\{0,1\}$. $M_{k,1}$ is defined for $k\geq 1$, whereas $M_{k,0}$ is defined for $k\geq 2$. Let
\[
\mathfrak{m}_{k,\omega}:=\{d(x)\ | \ x\in M_{k,\omega}\}.
\]
And
\[
\tilde{\mathfrak{m}}_{k,0}:=\mathfrak{m}_{k,0}\setminus \{\min \mathfrak{m}_{k,0}\}.
\]

Let $X,Y$ be two subsets of real numbers. By $X\prec Y$ we denote the fact that
\[
\sup X< \inf Y.
\]

Ivanov proves the following theorem.
\begin{theorem}\label{Ivanov}
    \begin{enumerate}
        \item $\mathcal{D}\ \cap\ \left(3+\sqrt{5}, \frac{5+3\sqrt{5}}{2}\right)=\mathfrak{m}_{1,1}\cup\bigcup_{k=2}^\infty (\mathfrak{m}_{k,0}\cup \mathfrak{m}_{k,1})$
        \item $\mathfrak{m}_{1,1}\prec\mathfrak{m}_{2,0}\prec\mathfrak{m}_{2,1}\prec\tilde{\mathfrak{m}}_{3,0}\prec\mathfrak{m}_{3,1}\prec\tilde{\mathfrak{m}}_{4,0}\prec\mathfrak{m}_{4,1}\prec\dots\prec\mathfrak{m}_{k,1}\prec\tilde{\mathfrak{m}}_{k+1,0}\prec\mathfrak{m}_{k+1,1}\prec\dots$
    \end{enumerate}
\end{theorem}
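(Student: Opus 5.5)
This is Ivanov's classical theorem from \cite{Ivanov1982}, so the argument will follow the structure of his work. Throughout I use the standard two-sided reformulation: $d(\alpha)$ is the supremum, over all doubly infinite limit words $(\dots,a_{-1},a_0,a_1,\dots)$ of the sequence $(a_n)$, of
\[
[a_0;a_{-1},a_{-2},\dots]\,[a_1;a_2,\dots].
\]

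For part 1, the plan is to locate which $\alpha$ can produce values in the interval $I=\bigl(3+\sqrt5,\tfrac{5+3\sqrt5}{2}\bigr)$. This is done by case analysis on $a=\limsup a_n$.
\begin{itemize}
\item If $a\geq 4$, then some factor is at least $4+\dots$, and a crude estimate (the second factor is at least about $1+1/(1+\dots)$) gives $d\geq \tfrac{5+3\sqrt5}{2}$.
\item If $a\leq 2$, the values are at most the value of $\overline{2,1}$-type words, which is at most $3+\sqrt5$. Indeed $3+\sqrt5$ is $d$ of $[\overline{2,1}]$: $(2+\tfrac{\sqrt3-1}{\dots})$, computed directly.
\item Hence $a=3$. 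Now split by the largest neighbour $b$ of $3$ occurring infinitely often. If $b\geq 2$, the product $[3;2,\dots][\dots]$ or $[3;\dots][2;\dots]$ already exceeds $\tfrac{5+3\sqrt5}{2}$. The block $3,3$ is excluded for the same reason.
\item So $\alpha\in M(3,1)$: every $3$ is eventually surrounded by $1$'s.
\end{itemize}

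Within $M(3,1)$, I will parametrize the eventual structure by the lengths of the runs of $1$'s and $2$'s between consecutive $3$'s. This is where Ivanov's $M_{k,\omega}$ arise. Here $k$ is the minimal length of a run of $1$'s adjacent to a $3$ that recurs infinitely often, and $\omega$ records whether a parity or $2$-adjacency pattern occurs. Showing that every element of $M(3,1)$ with $d<\tfrac{5+3\sqrt5}{2}$ falls into exactly one $M_{k,\omega}$, and that each $M_{k,\omega}$ lands inside $I$, will use comparison of continued fractions. The tool is the alternating-order rule: $[c_0;c_1,\dots]$ compares lexicographically with sign $(-1)^j$ at the first differing index $j$.

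For part 2, I need explicit bounds. For each class I will compute $\inf$ and $\sup$ of $\mathfrak m_{k,\omega}$ by identifying the extremal limit words. These are periodic words of the shape $\overline{3,1^{k},\dots}$, with the optimal choice of the tails forced by the alternating rule. Their values are explicit quadratic irrationals in $\mathbb{Q}(\sqrt{5})$-like fields. This yields monotone sequences in $k$, and I then verify the strict inequalities between consecutive classes. The removal of $\min\mathfrak m_{k,0}$ is needed because that minimum coincides with (or is not separated from) $\sup\mathfrak m_{k-1,1}$. It is attained only by a specific periodic word, which I will identify and exclude, and this gives the strict order for $\tilde{\mathfrak m}_{k,0}$.

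The main obstacle is the uniform-in-$k$ comparison in part 2, namely proving $\sup\mathfrak m_{k,1}<\inf\tilde{\mathfrak m}_{k+1,0}$ for all $k$ simultaneously. The gaps shrink geometrically, like $\varphi^{-2k}$. I would try first to express both endpoints via the convergents $F_k$ (Fibonacci numbers), since runs of $1$'s produce Fibonacci matrices. The difference then reduces to the sign of an explicit expression in $F_k,F_{k+1}$ and $(-1)^k$. That sign can be checked by Cassini's identity, with the small $k$ handled by direct computation.
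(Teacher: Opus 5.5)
The paper gives no proof of this statement (it is quoted from Ivanov), so your plan has to stand on its own. Its first step fails: the reduction ``hence $a=3$'' in part 1 is false.

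\textbf{Part 1.} For $a\ge4$ no crude estimate can give $d\ge\frac{5+3\sqrt5}{2}$. For $x=[\overline{4,1}]$ one has $[4;\overline{1,4}]=2+2\sqrt2$ and $[1;\overline{4,1}]=\frac{1+\sqrt2}{2}$. Hence $d(x)=(1+\sqrt2)^2=3+2\sqrt2\approx5.828$, which lies inside $\bigl(3+\sqrt5,\frac{5+3\sqrt5}{2}\bigr)$.

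Likewise your bound $3+\sqrt5$ for $a\le2$ fails, since $d([\overline{2}])=(1+\sqrt2)^2=3+2\sqrt2$ as well. Also $d([\overline{2,1}])=2+\sqrt3$, not $3+\sqrt5$; the correct bound for $M(2,1)$ is $\max\mathcal D(2,1)=3+\sqrt3$. These two examples are exactly the extra preimage classes of $Z_3$ in Theorem~\ref{equivalence classes}.

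The set identity of part 1 still holds, but only because $3+2\sqrt2=Z_3=3[\overline{(1)_5,3}]+1\in\mathfrak m_{3,0}$. To prove it you must show that $3+2\sqrt2$ is the \emph{only} value in the interval produced outside $M(3,1)$. That needs Ivanov's finer structure of $\mathcal D(a,b)$ below $\nu(a,b)$, as used in Lemma~\ref{global localization}:
\begin{itemize}
\item $\mathcal D(2,2)\cap(0,\nu(2,2))=\{3+2\sqrt2\}$, with $\nu(2,2)=5+\frac{2}{\sqrt3}>6$;
\item below $\nu(4,1)=\frac{9+2\sqrt{21}}{3}$, $\mathcal D(4,1)$ consists of the increasing points $e_n$, with $e_0=3+2\sqrt2$ and $e_1=\frac{59+4\sqrt{195}}{19}>6$.
\end{itemize}
Crude bounds suffice only for $b\ge2$ and for $a\ge5$.

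\textbf{Part 2.} As written this is a programme rather than an argument. You never pin down $M_{k,\omega}$ or its extremal words, and that classification is the substance of Ivanov's theorem.

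Two of your expectations are also wrong. First, $Z_k=\min\mathfrak m_{k,0}$ is not removed because it coincides with, or is inseparable from, $\sup\mathfrak m_{k-1,1}=P_k$. The paper proves $P_k<Z_k$ for $k\ge4$ (Theorem~\ref{isolated points}); that is what makes $Z_k$ isolated. The removal happens because $Z_k$ is an isolated minimum strictly below $Q_k=\min\tilde{\mathfrak m}_{k,0}$. For $k\ge3$, Ivanov's chain asserts only $P_k<Q_k$ and leaves open where $Z_k$ sits relative to $P_k$. So the comparison you actually need is $P_k<Q_k$.

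Second, the relevant runs of $1$'s have length $2k-1$ (cf.\ $Z_k=3[\overline{(1)_{2k-1},3}]+1$), not $k$. The gaps therefore decay like $F_{2k-3}^{-2}\asymp\varphi^{-4k}$ rather than $\varphi^{-2k}$ (Theorem~\ref{asymptotics}).

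Your Fibonacci/Cassini strategy for the uniform-in-$k$ comparison is the right kind of tool; it is how the paper shows $H_k(P_k)<0$. But it can only be carried out once the classes and their extremal words are identified.
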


Let
\begin{equation}
    Z_k:=\min \mathfrak{m}_{k,0}, \ k\geq 2,
\end{equation}
\begin{equation}
    Q_k:=\min \tilde{\mathfrak{m}}_{k,0}, k\geq 2,
\end{equation}
\begin{equation}
    P_k:=\max \mathfrak{m}_{k-1,1}, \ k\geq 3.
\end{equation}
Ivanov gives explicit formulas for these values.
\begin{equation}\label{Zk}
    Z_k=3[\overline{(1)_{2k-1},3}]+1,
\end{equation}
\begin{equation}\label{Qk}
    Q_k=3[(1)_{2k-1},2,\overline{1,3}]+1,
\end{equation}
\begin{equation}\label{Pk}
    P_k=[3,(1)_{2k-5},2,\overline{1,3}][(1)_{2k-3},3,(1)_{2k-3},2,1,1,2,\overline{1,3}].
\end{equation}

Moreover, he shows that the points $Z_2,Z_3$ are isolated in $\mathcal{D}$. The goal of this section is to prove the following theorem.
\begin{theorem}\label{isolated points}
    For all $k\geq 2$ the points $Z_k$ are isolated in $\mathcal{D}$.
\end{theorem}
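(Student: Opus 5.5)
Since $3+\sqrt5<Z_k<\frac{5+3\sqrt5}{2}$ for every $k\ge2$, Theorem~\ref{Ivanov} localizes the problem. By part~1, in a small neighbourhood of $Z_k$ the spectrum is contained in $\mathfrak{m}_{1,1}\cup\bigcup_{j}(\mathfrak{m}_{j,0}\cup\mathfrak{m}_{j,1})$. By the ordering in part~2, the only pieces that can accumulate at $Z_k$ from the left are $\mathfrak{m}_{k-1,1}$, together with $\mathfrak{m}_{1,1}$ and $\mathfrak{m}_{2,0}$ when $k=2$. From the right, the only piece that can accumulate is $\tilde{\mathfrak m}_{k,0}$. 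All other pieces lie beyond these, by the chain $\prec$.

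I would therefore aim to prove the two inequalities
\[
P_k<Z_k \qquad (k\ge3), \qquad\qquad Z_k<Q_k \qquad (k\ge2).
\]
Here $P_k=\max\mathfrak m_{k-1,1}$ and $Q_k=\min\tilde{\mathfrak m}_{k,0}$, both given explicitly by \eqref{Pk}, \eqref{Qk}. Once these hold, $(P_k,Z_k)\cup(Z_k,Q_k)$ contains no point of $\mathcal D$, since the sets in the $\prec$-chain are arranged in order. So $Z_k$ is isolated. This uses that the maxima and minima are attained, which is implicit in the definitions of $P_k$ and $Q_k$ as $\max$ and $\min$. For $k=2$ the left side is already covered by Ivanov's result that $Z_2$ is isolated. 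Alternatively, $\mathfrak m_{1,1}\prec\mathfrak m_{2,0}$ gives a left gap provided $\sup\mathfrak m_{1,1}$ is attained or $Z_2$ exceeds it strictly, and this is automatic from $\prec$, since $\sup<\inf=Z_2$.

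\textbf{Right gap.} Write $Z_k-Q_k=3\bigl([\overline{(1)_{2k-1},3}]-[(1)_{2k-1},2,\overline{1,3}]\bigr)$. The two continued fractions agree in the first $2k-1$ partial quotients and then differ at position $2k$, with $3$ versus $2$. The sign rule for continued fractions with a common prefix of odd length $2k-1$ decides the order. A larger entry at an even index $2k$ (counting from index $1$ after the integer part $0$) makes the value smaller, so $Z_k<Q_k$. I would check this parity convention carefully. The strict inequality together with $Q_k=\min\tilde{\mathfrak m}_{k,0}$ gives the right gap, and in fact the right gap is exactly $(Z_k,Q_k)$.

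\textbf{Left gap.} This is the main obstacle, since $P_k$ is a product of two continued fractions and is not a simple affine expression in one of them. I would write $Z_k=3\xi_k+1$ with $\xi_k=[\overline{(1)_{2k-1},3}]$. The plan is to expand both sides through the convergents of the all-ones block, i.e. Fibonacci numbers. Every $(1)_m$ prefix acts by the matrix $\begin{pmatrix}F_{m+1}&F_m\\F_m&F_{m-1}\end{pmatrix}$. Both $Z_k$ and $P_k$ tend to $3\cdot\frac{\sqrt5-1}{2}+1$, and their differences from the limit are of order $\varphi^{-4k}$. I would first express $Z_k$ and each factor of $P_k$ as explicit quadratic irrationals in $\varphi^{-2k}$ up to $O(\varphi^{-4k})$ errors. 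Comparing the leading correction terms should show that $Z_k-P_k\asymp c\,\varphi^{-2k}$ with an explicit $c>0$, which gives $P_k<Z_k$ for all $k\ge k_0$.

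The remaining small cases, down to $k=3$ (where Ivanov's result also applies), I would settle by direct numerical evaluation of \eqref{Zk} and \eqref{Pk} with rigorous interval bounds. The same asymptotic expansion also yields the gap lengths $Z_k-P_k$ and $Q_k-Z_k$, which the paper promises to describe.
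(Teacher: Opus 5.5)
Your reduction matches the paper's. By Ivanov's theorem everything hinges on $P_k<Z_k$ for $k\ge4$, with $k=2,3$ done by Ivanov. The right gap is automatic, since $Q_k=\min(\mathfrak{m}_{k,0}\setminus\{Z_k\})>Z_k$ by definition.

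The genuine gap is the left inequality, which is the entire content of the theorem. You only say the expansion ``should show'' it, and the expansion as sketched would not. Put $\Lambda=\frac{5+3\sqrt5}{2}$. Both $\Lambda-Z_k$ and $\Lambda-P_k$ are of exact order $F_{2k-3}^{-2}\asymp\varphi^{-4k}$. So expansions with $O(\varphi^{-4k})$ errors cannot separate $Z_k$ from $P_k$, and $Z_k-P_k$ is of order $\varphi^{-4k}$, not $\varphi^{-2k}$. You must expand to first order in $s=F_{2k-3}^{-2}$. The leading coefficient of $Z_k-P_k$ is then $C_-=\frac{-301+126\sqrt5+2\sqrt{21}+\sqrt{105}}{8}\approx0.0196$. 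It is positive only after a large cancellation: the first-order coefficients of $\Lambda-P_k$ and $\Lambda-Z_k$ are about $0.112$ and $0.092$. So its sign must be computed exactly, not anticipated. Moreover, ``for $k\ge k_0$'' becomes a proof only with an explicit bound on the $O(s^2)$ remainder that fixes $k_0$. That bound must be fairly sharp: at $k=4$, $Z_4-P_4\approx6.4\cdot10^{-4}$ while $\Lambda-Z_4\approx3.8\cdot10^{-3}$. The proposal supplies none of this.

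The paper avoids asymptotics altogether. Set $n=2k-3$, $u=F_n$, $v=F_{n+1}$, and use Cassini's identity $v^2=uv+u^2-1$. Then $Z_k$ is the larger root of $H_k(X)=(3u+4v)X^2-(15u+26v)X+(3u+4v)$, whose other root lies in $(0,1)$. Hence for $X>1$ one has $X<Z_k\iff H_k(X)<0$. Write $P_k=N_1N_2/(D_1D_2)$ exactly via the matrices $T_a$ and $\tau=[\overline{1,3}]$, using $3\tau^2-3\tau-1=0$. This gives $9D^2H_k(P_k)=-(\tau E_1+E_0)$, with $E_0,E_1$ manifestly positive once $u\ge5$, i.e.\ for all $k\ge4$ at once. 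Your proof needs an exact sign computation of this kind, or else an asymptotic argument with effective error terms plus a rigorous finite check.

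Three smaller points:
\begin{itemize}
\item In the paper's convention $[\overline{(1)_{2k-1},3}]$ has integer part $1$, so the common limit is $3\varphi+1$, not $3\cdot\frac{\sqrt5-1}{2}+1$.
\item Your parity rule is misstated. With integer part $0$, a larger entry at an even index \emph{increases} the value; in the paper's convention the first disagreement is at the odd index $2k-1$. This is harmless, since $Z_k<Q_k$ holds by definition.
\item The points $Z_j$, $j\neq k$, are not in the $\prec$-chain, so they need a separate remark. Their strict monotonicity in $j$ suffices for isolation.
\end{itemize}
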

Using theorem ~\ref{Ivanov} and beforementioned results of Ivanov it is sufficient to show that $P_k<Z_k$ for all $k\geq 4$.

Moreover, we will show how the intervals $Z_k-P_k$, $Q_k-Z_k$, $Q_k-P_k$ shrink as $k$ approaches infinity. We will prove the following result.
\begin{theorem}\label{asymptotics}
    Let $\{F_n\}$ be the sequence of Fibonacci numbers, where $F_0=0, F_1=1$. The following holds
    \begin{equation}\label{Zk-Pk}
        Z_k-P_k=\frac{C_-}{F_{2k-3}^2}+O(F_{2k-3}^{-4}),
    \end{equation}
    \begin{equation}\label{Qk-Zk}
        Q_k-Z_k=\frac{C_+}{F_{2k-3}^2}+O(F_{2k-3}^{-4}),
    \end{equation}
    \begin{equation}\label{Qk-Pk}
        Q_k-P_k=\frac{C}{F_{2k-3}^2}+O(F_{2k-3}^{-4}).
    \end{equation}
    Where
    \[
    C_-=\frac{-301+126\sqrt{5}+2\sqrt{21}+\sqrt{105}}{8},
    \]
    \[
    C_+=\frac{3(247+7\sqrt{21}-111\sqrt{5}-3\sqrt{105})}{16},
    \]
    \[
    C=\frac{139+25\sqrt{21}-81\sqrt{5}-7\sqrt{105}}{16}.
    \]
\end{theorem}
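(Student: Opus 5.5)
The plan is a direct asymptotic expansion of the three explicit formulas \eqref{Zk}, \eqref{Qk}, \eqref{Pk} in terms of Fibonacci numbers, after which the differences are read off. All three quantities are built from long blocks of ones of length about $2k$. Continued fractions through such blocks are governed by the matrices
\[
\begin{pmatrix}1&1\\1&0\end{pmatrix}^n=\begin{pmatrix}F_{n+1}&F_n\\F_n&F_{n-1}\end{pmatrix}.
\]
So for any tail $t$ we can write
\[
[(1)_n,t]=\frac{F_{n+1}t+F_n}{F_nt+F_{n-1}}.
\]
Using Binet's formula, this becomes
\[
[(1)_n,t]-\varphi=\frac{(-1)^{n}\,c(t)}{F_n^2}+O(F_n^{-4}),
\]
where $\varphi=\frac{1+\sqrt5}{2}$ and $c(t)$ is an explicit Möbius-type function of the tail (essentially $\frac{1}{(t+\varphi^{-1})\sqrt5}$ up to sign and normalisation). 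I will normalise every block length to $N=2k-3$, so that the shifts from $2k-1$ and $2k-5$ turn into fixed factors $\varphi^{\pm 4}$ absorbed into the constants.

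First I treat $Z_k$ and $Q_k$. The value $Q_k$ is $3[(1)_{2k-1},t_Q]+1$ with the fixed tail $t_Q=[2;\overline{1,3}]$; its tail $[\overline{1,3}]=\frac{3+\sqrt{21}}{6}$ is what produces the $\sqrt{21}$. The value $Z_k$ is $3[(1)_{2k-1},t_Z]+1$ with the periodic tail $t_Z=[3;\overline{(1)_{2k-1},3}]$. To first order in $F_N^{-2}$, $t_Z$ equals $3+\varphi^{-1}=\frac{5+\sqrt5}{2}$ plus an error of order $F_N^{-2}$. That error feeds back into $Z_k$ only at order $F_N^{-4}$, since the derivative of $[(1)_n,t]$ in $t$ is $O(F_n^{-2})$. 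Hence
\[
Q_k-Z_k=3\bigl([(1)_{2k-1},t_Q]-[(1)_{2k-1},t_Z]\bigr),
\]
and the mean value formula for the Möbius map gives
\[
Q_k-Z_k=\frac{3(t_Z-t_Q)}{F_{2k-1}^2\,(t_Q+\varphi^{-1})(t_Z+\varphi^{-1})}+O(F^{-4}).
\]
Converting $F_{2k-1}^2=\varphi^4F_{2k-3}^2(1+O(F^{-2}))$ and simplifying in $\mathbb{Q}(\sqrt5,\sqrt{21})$ should produce $C_+$. I will check this against the stated form of $C_+$, which has terms in $\sqrt{21}$ and $\sqrt{105}$.

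Next I treat $P_k$, which is a product of two factors, each near a fixed limit:
\[
A=[3,(1)_{2k-5},2,\overline{1,3}]\to 3+\varphi^{-1},\qquad B=[(1)_{2k-3},3,(1)_{2k-3},2,1,1,2,\overline{1,3}]\to\varphi.
\]
Their product tends to $(3+\varphi^{-1})\varphi=3\varphi+1$, which is also the common limit of $Z_k$, $Q_k$ and $P_k$. I expand $A$ to first order through the block $(1)_{2k-5}$ with tail $[2;\overline{1,3}]$, and $B$ through its first block $(1)_{2k-3}$. In $B$ the tail beyond the first block begins $[3;(1)_{2k-3},\dots]$, which is $3+\varphi^{-1}+O(F^{-2})$; again the inner block contributes only at order $F^{-4}$. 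Then
\[
P_k=(3+\varphi^{-1})\varphi+\varphi\,\delta_A+(3+\varphi^{-1})\,\delta_B+O(F^{-4}).
\]
Subtracting this from the analogous first-order expansion of $Z_k$ gives $C_-$. Then $C=C_-+C_+$ follows, or $Q_k-P_k$ is computed directly as a consistency check.

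The main obstacle is bookkeeping, in two places.

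\begin{itemize}
\item \textbf{Signs and parities.} All blocks here have odd length, and $(-1)^n$ determines from which side each quantity approaches the limit. This must be tracked carefully, because the sign of $C_-$ is exactly the isolation statement $P_k<Z_k$ (Theorem~\ref{isolated points}).
\item \textbf{Uniformity of the error terms.} I must justify that the $O(F^{-4})$ bounds hold uniformly. To do this I will write each quantity exactly as a Möbius map of $\varphi^{-2N}$, with coefficients analytic near $0$, and Taylor expand. In particular $t_Z$ is a fixed point of a Möbius map whose coefficients involve $F_{2k-1}$, so it is explicitly a root of a quadratic that can be expanded directly.
\end{itemize}

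Finally I will verify numerically for $k=4,5,6$ that $F_{2k-3}^2(Z_k-P_k)$ approaches $C_-\approx 0.29$.
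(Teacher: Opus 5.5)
Your argument is correct, but it reaches the constants by a different mechanism than the paper. The paper reuses the exact closed forms built for Theorem~\ref{isolated points}. With $u=F_{2k-3}$, $v=F_{2k-2}$, it writes $Z_k=z(r)$, $Q_k=q(r)$, $P_k=p(r,s)$ as explicit algebraic functions of $r=v/u$ and $s=u^{-2}$. Cassini's identity $r^2-r-1=-s$ then gives $r=\varphi-s/\sqrt5+O(s^2)$, and the paper Taylor-expands each function once at $(\varphi,0)$ and subtracts. Uniformity of the remainder is automatic from smoothness in $(r,s)$, but the constants appear only as outputs of derivative computations.

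You instead linearise the Möbius map of a block of ones around its fixed point $\varphi$ and use the exact difference formula for Möbius maps. This avoids the quadratic for $Z_k$ and the long matrix products for $P_k$. It gives $Q_k-Z_k$ in one step, and it makes the constants structural. With $t_Z^{*}=3+\varphi^{-1}$ and $t_Q=\frac{1+\sqrt{21}}{2}$, your formula gives
\[
C_+=\frac{3\,(t_Z^{*}-t_Q)}{\varphi^{4}\,(t_Q+\varphi^{-1})(t_Z^{*}+\varphi^{-1})}=\frac{3(4+\sqrt5-\sqrt{21})}{\varphi^{7}(\sqrt{21}+\sqrt5)},
\]
which simplifies exactly to the stated $C_+$. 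The price is that you must control the $k$-dependent inner tails and the uniformity of the errors by hand. Your plan does this adequately: expanding in $\varphi^{-2N}$ plays the role of the paper's $s$.

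Fix two slips before carrying out the computation.

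\emph{The leading coefficient.} It is not essentially $\frac{1}{\sqrt5(t+\varphi^{-1})}$. Since $F_n\varphi+F_{n-1}=\varphi^n$, one has exactly
\[
[(1)_n,t]-\varphi=\frac{(-1)^n\,(t-\varphi)}{\varphi^{n}\,(F_nt+F_{n-1})},
\qquad\text{so}\qquad
c(t)=\frac{t-\varphi}{\sqrt5\,(t+\varphi^{-1})}.
\]
The factor $t-\varphi$ is essential in the separate expansions of $A$, $B$ and $Z_k$. It equals $2$ for the limiting tails of $Z_k$ and of $B$, and $\frac{\sqrt{21}-\sqrt5}{2}$ for $t_Q$. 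Dropping it would, for instance, halve $A_Z=\frac{6}{\sqrt5\,\varphi^{7}}$ and give a wrong $C_-$.

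\emph{The numerical target.} $C_-\approx 0.0196$, not $0.29$; likewise $C_+\approx 0.0251$ and $C\approx 0.0446$. Indeed $F_5^2(Z_4-P_4)\approx 0.016$ and $F_7^2(Z_5-P_5)\approx 0.019$. With your stated target, the planned numerical check would wrongly appear to fail.
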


Finally we will consider the number of equivalence classes of each point $Z_k$. Two numbers $x=[x_0;x_1,x_2,\dots]$ and $y=[y_0;y_1,y_2,\dots]$ are called equivalent, if for some integers $s,l$
\[
x_{s+m}=y_{l+m} \ \forall \ m\in\mathbb{N}.
\]
Clearly if $x$ and $y$ are equivalent then $d(x)=d(y)$.

In \cite{Ivanov1982} Ivanov states that the point $Z_3=3+2\sqrt{2}$ is interesting, since it is the first example of a point in the Dirichlet spectrum which contains more than one preimage, and the number of its preimages is not continual. This, however, is not the case for other points $Z_k$. This fact will be proven in the following theorem.
\begin{theorem}\label{equivalence classes}
    For the isolated points $Z_k$ the following holds
    \begin{equation}\label{equivalence classes Z3}
        d^{-1}(Z_3)=\{x:x\sim[\overline{2}]\}\cup \{x:x\sim[\overline{(1)_5,3}]\}\cup\{x:x\sim[\overline{4,1}]\},
    \end{equation}
    \begin{equation}\label{equivalence classes Zk}
        d^{-1}(Z_k)=\{x:x\sim[\overline{(1)_{2k-1},3}]\}, \ k\neq 3.
    \end{equation}
\end{theorem}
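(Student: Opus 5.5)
We determine $d^{-1}(Z_k)$ by locating any preimage inside Ivanov's classification and then pinning down its tail exactly. Suppose $d(x)=Z_k$. For $k\ge 4$ we have $Z_k\in(3+\sqrt5,\tfrac{5+3\sqrt5}{2})$, so by part 1 of Theorem~\ref{Ivanov} the number $x$ lies in $M_{1,1}$ or in some $M_{j,\omega}$. The separation in part 2 of Theorem~\ref{Ivanov}, together with $P_k<Z_k<Q_k$ (from the proof of Theorem~\ref{isolated points}), then forces $x\in M_{k,0}$ and $d(x)=\min\mathfrak m_{k,0}$.

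The core step is a rigidity statement: if $x\in M_{k,0}$ and $d(x)=Z_k$, then $x\sim[\overline{(1)_{2k-1},3}]$. I would prove this by revisiting Ivanov's argument that $Z_k$ is the minimum of $\mathfrak m_{k,0}$.

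1. Elements of $M(3,1)$ have tails consisting of $1$'s and $3$'s, apart from finitely many $2$'s. In $M_{k,0}$ the blocks around each $3$ are runs of $1$'s whose lengths are controlled by $k$.
2. Evaluated at a $3$, one has $d_n=3+[0;\dots]+[0;\dots]$, with the two tails beginning with runs $(1)_{a}$ and $(1)_{b}$.
3. The minimal value arises when infinitely many $3$'s sit between runs of exactly $2k-1$ ones on both sides, continued periodically. Any deviation occurring infinitely often pushes $\limsup d_n$ strictly above $Z_k$. Such deviations are a shorter or longer run, a $2$, or a $1$-run of length $2k-1$ followed by something other than $3$.
4. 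The strictness comes from the lexicographic monotonicity of continued fractions. The key quantitative point is that a deviation at depth $m$ changes $d_n$ by roughly $F_m^{-2}$ with a definite sign.
5. A deviation could occur infinitely often but at unboundedly growing depth from some $3$. In that case, the $\limsup$ along the $3$'s adjacent to such deviations still exceeds $Z_k$, provided the first mismatch always has the sign that increases the value. I would show this by a parity argument on the position of the first mismatch with the periodic word, in the style of Ivanov's inequalities.

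I expect this step to be the main obstacle. One must rule out configurations where a deviation decreases one factor while increasing the other, so comparisons at matched positions on both sides are needed. Conversely, every $x\sim[\overline{(1)_{2k-1},3}]$ has $d(x)=Z_k$ by formula (\ref{Zk}), which gives (\ref{equivalence classes Zk}) for $k\ge4$.

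For $k=2$ and $k=3$, the same reasoning applies inside Ivanov's interval, except that $\mathcal D$ near $Z_3=3+2\sqrt2$ may also contain values coming from $M(2)$ or $M(4)$. For $Z_3$, I would do a short case analysis on $\limsup x_n$.

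1. If $\limsup x_n\ge5$, then $d(x)>5>Z_3$.
2. If $\limsup x_n=4$, then $d_n\ge 4+[0;\dots]+[0;\dots]$ at a $4$. Getting $d=3+2\sqrt2$ forces both neighbours of each $4$ eventually to be $1$ with tail $\overline{4,1}$, so $x\sim[\overline{4,1}]$. One checks $[4;\overline{1,4}]\,[1;\overline{4,1}]$-type products equal $3+2\sqrt2$.
3. If $\limsup x_n=2$, then $d(x)\ge(1+\sqrt2)^2=3+2\sqrt2$, with equality iff $x\sim[\overline2]$, which is Markov-type rigidity for the sequence $\overline2$.
4. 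The $M(3)$ case is the rigidity argument above with $k=3$.

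For $Z_2$, which lies below $3+\sqrt5$, I would rule out $M(2)$ and $M(4)$ by direct bounds, since their values avoid $Z_2$. Combining the cases yields (\ref{equivalence classes Z3}) and (\ref{equivalence classes Zk}).
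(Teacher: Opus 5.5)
Your overall architecture matches the paper's: put $x$ in $M(3,1)$, then in $M_{k,0}$ via the ordering in Theorem~\ref{Ivanov} and $P_k<Z_k<Q_k$, then invoke rigidity. However, the first step fails as written for $k\ge4$ and is incomplete for $k=2$.

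\textbf{Localization to $M(3,1)$.} Part~1 of Theorem~\ref{Ivanov} is an identity between sets of \emph{values}. It does not say that every $x$ with $d(x)\in(3+\sqrt5,\frac{5+3\sqrt5}{2})$ lies in $M_{1,1}$ or some $M_{j,\omega}$. The case $k=3$ shows this inference is false, since $[\overline{2}]$ and $[\overline{4,1}]$ are preimages of $Z_3$ outside $M(3,1)$. So for \emph{every} $k$ you must exclude $x\in M(a,b)$ with $(a,b)\neq(3,1)$. This is what Lemma~\ref{global localization} does, using Ivanov's 1978 description of $\mathcal D(a,b)$:
\begin{itemize}
\item $d(x)=\frac{3+\sqrt5}{2}$ for $a=1$;
\item $\max\mathcal D(2,1)=3+\sqrt3<Z_2$;
\item $\nu(2,2)>6$, and $d([\overline{2}])=Z_3$ is the only value below it;
\item $\min\mathcal D(3,b)\ge7$ and $\min\mathcal D(4,b)\ge9$ for $b\ge2$;
\item $\nu(4,1)>6$, and only $e_0=Z_3$ lies below $6$;
\item $\min\mathcal D(a)\ge a+1\ge6$ for $a\ge5$;
\item $Z_k<\frac{5+3\sqrt5}{2}<6$.
\end{itemize}
You also need Ivanov's proof, not merely the statement of part~1, to know that every $x\in M(3,1)$ with $d(x)<\frac{5+3\sqrt5}{2}$ belongs to one of the $M_{j,\omega}$.

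\textbf{Errors in your case analysis for $Z_3$.} These would need repair before the analysis could be reused for all $k$.
\begin{itemize}
\item The claim $d(x)>5>Z_3$ is false, because $Z_3=3+2\sqrt2\approx5.83$.
\item The claim that $\limsup x_n=2$ forces $d(x)\ge(1+\sqrt2)^2$ fails on $M(2,1)$; for example $d([\overline{2,1}])=2+\sqrt3$. So $M(2,1)$ and $M(2,2)$ must be treated separately.
\item The sets $M(3,b)$ with $b\ge2$ are not addressed.
\item $Z_2\approx5.68$ lies above $3+\sqrt5$, not below it.
\end{itemize}

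\textbf{Rigidity.} You correctly identify this as the core step, but it is only an outline.
\begin{itemize}
\item Step~5 assumes what has to be proved: that the first mismatch always has the value-increasing sign, including the mixed case where one factor decreases and the other increases.
\item Step~2 writes $d_n$ as a Lagrange-type sum $3+[0;\dots]+[0;\dots]$. Here $d_n=[a_n;a_{n-1},\dots,a_1][a_{n+1};a_{n+2},\dots]$ is a product; on the periodic word it equals $(3+1/y)\,y=3y+1$ with $y=[\overline{(1)_{2k-1},3}]$. So the comparisons would have to be redone for products.
\end{itemize}
The paper does not reprove this step. It proves Lemma~\ref{rigidity} simply by citing Ivanov's Proposition~1 from his 1982 paper: if $x\in M_{k,0}$ and $d(x)<Q_k$, then $x\sim[\overline{(1)_{2k-1},3}]$. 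Since $Z_k<Q_k$, the conclusion is immediate.

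Similarly, your ``Markov-type rigidity'' for $[\overline{2}]$, and your claim that $d(x)=3+2\sqrt2$ forces the tail $\overline{4,1}$, are the one-class property of the minimal points of $\mathcal D(2,2)$ and $\mathcal D(4,1)$. The paper takes this from Ivanov's 1978 work. Either cite these results or give complete proofs; as written, the proposal establishes neither.
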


\section{Upper bound of the origin of the ray}

\subsection{The plan}

In \cite{Hall1947} Hall proved the following theorem

\begin{theorem}[Hall]\label{Hall's theorem}
Let $C(A)\subset A$ and $C(B)\subset B$ be Cantor sets constructed
on closed intervals $A$ and $B$, respectively. Suppose that, at every
stage of the constructions, each deleted open interval is no longer
than either of the two adjacent intervals that remain. If
\[
    \frac{1}{3}\leq \frac{|A|}{|B|}\leq 3,
\]
then
\[
    C(A)+C(B)=A+B.
\]
\end{theorem}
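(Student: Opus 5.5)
The plan is to reduce the statement to an intersection problem and then run a Newhouse-type ``gap lemma'' argument, which uses the hypotheses in exactly the form given. Write $B'_t:=t-C(B)$; this is a Cantor set on the interval $t-B$, built by the reflected construction, so it satisfies the same gap condition. The inclusion $C(A)+C(B)\subset A+B$ is trivial. For the reverse inclusion, fix $t\in A+B$; then $t\in C(A)+C(B)$ if and only if $C(A)\cap B'_t\neq\emptyset$. Note that $A\cap(t-B)\neq\emptyset$ because $t\in A+B$.

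The local hypothesis (every deleted gap is no longer than either adjacent remaining interval) says each construction has thickness $\ge 1$. Concretely, if $U$ is a gap deleted from an interval $I$ at some stage, then each component of $I\setminus U$ containing an endpoint of $U$ is a construction interval (a bridge) of length $\ge|U|$. The global hypothesis $\frac13\le |A|/|B|\le 3$ controls the first step. Any gap of $A$ removed at the first stage has length at most $|A|/3\le|B|=|t-B|$; gaps removed later are shorter still. Symmetrically, every gap of $B'_t$ has length at most $|B|/3\le|A|$. Hence neither interval $A$, $t-B$ fits inside a bounded gap of the other set.

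Now suppose, for contradiction, that $C(A)\cap B'_t=\emptyset$. First I settle the degenerate configurations. If an endpoint of $A$ lies in $t-B$, it is a point of $C(A)$, and one must show it lies in $B'_t$. If it lies in a gap $V$ of $B'_t$, then, since $|V|\le|A|$ and the adjacent bridge of $V$ is at least $|V|$ long, the points of $B'_t$ bordering $V$ force a linked pair of gaps or a common point. This case analysis, together with the ordering of endpoints of $A$ and $t-B$, is the routine part. In the main case we obtain a \emph{linked} pair $(U,V)$: $U$ is a gap of $C(A)$ (or the unbounded complement), $V$ is a gap of $B'_t$, and each contains exactly one endpoint of the other.

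The key step is the following descent. Take the linked pair $(U,V)$ and suppose, say, $|U|\le|V|$. Consider the endpoint $u$ of $U$ lying inside $V$. The bridge of $C(A)$ adjacent to $U$ at $u$ has length $\ge|U|$. This is where I expect the main obstacle: I must show that this bridge cannot lie entirely inside $V$, which requires comparing its length to $|V|$, not just to $|U|$. To handle it, I would first replace $U$ by the \emph{longest} gap of $C(A)$ inside the relevant construction interval of $A$ that meets $V$, and use the stage-by-stage structure, so that the bridge next to it is a whole construction interval whose length is at least that of any gap it contains. The bridge then contains points of $C(A)$ outside $V$, contradicting disjointness unless there is a gap $U'\subset$ bridge that links with $V$. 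This $U'$ is deleted at a later stage, so it is strictly shorter and deeper in the construction.

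Iterating gives linked pairs $(U_n,V_n)$ in which at least one member advances one stage each time, so the gap lengths tend to $0$. The endpoints converge to a common limit point $x$. Since $x$ is a limit of endpoints of gaps of both sets, and both sets are closed, $x\in C(A)\cap B'_t$, which is a contradiction. Therefore $t\in C(A)+C(B)$ for every $t\in A+B$, proving $C(A)+C(B)=A+B$.
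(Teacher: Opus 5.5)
The paper gives no proof of this statement; it only quotes Hall's theorem. Your architecture is a legitimate route: reduce to $C(A)\cap(t-C(B))\neq\emptyset$, run a Newhouse-type descent through linked gaps, and let the ratio hypothesis control the unbounded gaps. However, the central descent step fails as written, so there is a genuine gap.

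\textbf{The crossing step.} With $|U|\le|V|$, you try to push the piece adjacent to the \emph{shorter} gap $U$ across $V$. All you know is that this piece has length at least $|U|$. That can be far less than the distance from $u$ to the far endpoint of $V$, so the piece can lie entirely inside $V$. This yields no contradiction, since points of $C(A)$ may perfectly well lie in gaps of $B'_t$. Your remedy (pass to the longest gap of some construction interval meeting $V$) still gives no lower bound in terms of $|V|$. So the sentence ``the bridge then contains points of $C(A)$ outside $V$'' is unsupported.

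\textbf{The decrease of gap lengths.} The claim ``$U'$ is deleted at a later stage, so it is strictly shorter'' is false under Hall's hypothesis. On $[0,10]$ delete $(1,1.5)$, then delete $(4,6)$ from $[1.5,10]$. Both deletions satisfy the gap condition, yet the later gap is longer. So your iteration has neither a valid crossing nor a valid decrease.

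\textbf{The repair.} Let the \emph{longer} gap do the crossing, and obtain the decrease from trapping rather than from depth.
\begin{enumerate}
\item If $|U|\le|V|$, let $v\in U$ be the endpoint of $V$ lying in $U$. Let $P$ be the piece left next to $V$ at $v$ when $V$ was deleted. For unbounded $V$, take $P=t-B$; then $|P|=|B|\ge|A|/3\ge|U|$, and this is exactly where $\frac{1}{3}\le|A|/|B|\le3$ enters.
\item Since $|P|\ge|V|\ge|U|$ exceeds the distance from $v$ to the far endpoint $u'$ of $U$, the point $u'\in C(A)$ is interior to $P$. So either $u'\in B'_t$, or $u'$ lies in a gap $V'\subset P$ of $B'_t$ that is linked with $U$.
\item The piece next to $V'$ on the side of $v$ lies in the part of $U$ between $V'$ and $v$. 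Hence $|V'|<|U|$.
\item Alternating roles produces linked pairs with $|V|\ge|U|>|V'|>|U'|>\cdots$. All these gaps are distinct, so their lengths tend to $0$.
\item Because an endpoint of $U_n$ lies in $V_n$, the distance between $C(A)$ and $B'_t$ is less than $|V_n|\to0$. Compactness then gives a common point. This also replaces your claim that the endpoints converge, which does not follow merely from the lengths shrinking.
\end{enumerate}

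\textbf{The starting pair.} This needs no case analysis. If, say, $\min A<b:=\min(t-B)\le\max A$ and $b\notin C(A)$, then the gap of $C(A)$ containing $b$ is linked with the unbounded gap $(-\infty,b)$ of $B'_t$.
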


This fact will be used later.

\begin{definition}
Set
\[
F_1=
\left\{
[4;3,a_2,a_3,\ldots]:
\begin{array}{c}
a_j\in\{1,2,3,4\}\quad (j\ge 2),\\[2mm]
44,\ 41414,\ 4131414,\ 4141314\\
\text{do not occur in }(a_0,a_1,a_2,\ldots)
\end{array}
\right\}.
\]
\end{definition}

From now on we will call the strings $44, 41414, 4131414, 4141314$ forbidden words. Using properties of continued fractions it is easy to show that

\begin{equation}
    L_F:=\min F_1=[4;3,\overline{1,4,1,4,1,3,1,3}], \ U_F:=\max F_1 = [4;3,\overline{4,1,4,1,3,1,3,1}] 
\end{equation}

First we will show that $F_1$ is a Cantor set. Then, using Hall's theorem, we will prove the fact that 

\[
F_1*F_1=[L_F^2,U_F^2]
\]

Afterwards, we will introduce the constant

\[
C:=\frac{448875+1611\sqrt{67773}}{46982}.
\]

It is easy to show that

\[
L_F^2<C<10+6\sqrt{2}<U_F^2
\]

Thus, every number

\[
\alpha\in[C,10+6\sqrt{2}]
\]

can be represented as 

\[
\alpha=XY,\ X=[x_0;x_1,x_2,\dots]\in F_1,\ Y=[y_0;y_1,y_2,\dots]\in F_1.
\]

Let

\[
S_i:=(x_i,x_{i-1},\dots,x_1,x_0,y_0,y_1,\dots,y_{i-1},y_i),
\]

and

\[
\xi:=[S_1,2,S_2,2,S_3,2,\dots].
\]

For $\xi$ we will show that

\[
d(\xi)=\alpha.
\]

Therefore, 

\[
[C,10+6\sqrt{2}]\subset \mathcal{D}.
\]

Since $\mu\leq10+6\sqrt{2}$ (see \cite{Ivanov1982}), this will conclude our proof.

\subsection{$F_1$ as a Cantor set}

In this section we demonstrate how $F_1$ can be constructed as a Cantor set.

\begin{definition}
    A finite string $w=(4,3,a_2,\dots,a_n)$ is called an admissible prefix if it consists only of digits $1,2,3,4$ and does not contain any forbidden words.

    By $\mathcal{K}(w)$ we denote the set 
    \[
    \{[4;3,a_2,\dots,a_n,a_{n+1},\dots]\in F_1\}.
    \]
    
    And by $T[w]$ we denote the closed interval
    \[
    [\min \mathcal{K}(w),\max \mathcal{K}(w)].
    \]

    Any number of the form
    \[
    [4;3,a_2,\dots,a_n,a_{n+1},\dots]\in F_1
    \]
    will be called an admissible continuation of $w$. The number $[a_{n+1};a_{n+2},\dots]$ will be called the tail of the admissible continuation.
\end{definition}

We will need the following lemma.

\begin{lemma}\label{lem:intersection}
    Let $w=(a_0,a_1,\dots,a_n)$ be an admissible prefix, such that for some digit $d$ the prefixes $wd$ and $w(d+1)$ are admissible as well. Then the closed intervals $T[wd]$ and $T[w(d+1)]$ do not intersect. Moreover, there is an open interval between them.
\end{lemma}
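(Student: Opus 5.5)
The plan is to reduce the statement to a comparison between the extreme tails of the two branches. For $[4;3,a_2,\dots,a_n,t]$ with tail $t=[a_{n+1};a_{n+2},\dots]$, the map $t\mapsto[4;3,a_2,\dots,a_n,t]$ is a Möbius transformation. It is strictly monotone in $t$, increasing or decreasing according to the parity of $n$, and in particular injective. So it suffices to work in the tail coordinate and show that the sets of admissible tails beginning with $d$ and with $d+1$ are separated by an open interval. Write $t_{\min}(wd)$ and $t_{\max}(wd)$ for the smallest and largest admissible tails starting with $d$, and similarly for $d+1$. Then $T[wd]$ and $T[w(d+1)]$ are the images of $[t_{\min}(wd),t_{\max}(wd)]$ and $[t_{\min}(w(d+1)),t_{\max}(w(d+1))]$. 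The claim becomes
\[
t_{\max}(wd)<t_{\min}(w(d+1)).
\]
The strict inequality gives the open gap once we pass back through the monotone Möbius map.

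The extremes exist because the admissible tails form a compact set. We are avoiding finitely many finite forbidden words over a finite alphabet, so the set is closed in the product topology, and the continued fraction map is continuous on it. Every tail starting with $d$ is at most $d+1/[1;\dots]\le d+1$, and every tail starting with $d+1$ is at least $d+1$. So the two intervals can only touch at $d+1$. For a tail $t=[d;a_{n+2},\dots]$ we have $t=d+1$ only if $[a_{n+2};\dots]=1$. This is impossible, since an infinite continued fraction with partial quotients $\ge1$ is irrational and exceeds $1$. More quantitatively, the next partial quotient after $d$ is at most $4$ (or is further restricted), so $t_{\max}(wd)\le d+1/[1;4,\dots]<d+1$. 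Symmetrically, $t_{\min}(w(d+1))\ge d+1+1/[4;\dots]>d+1$, or $\ge d+1+1/5$ in the worst case.

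The only subtlety, and the step I expect to need care, is confirming that the infima over admissible continuations are attained by genuine admissible sequences. One must rule out that the extremal strings such as $d,1,4,1,4,\dots$ are blocked by forbidden words while approximating sequences push the value toward $d+1$. Compactness handles this abstractly: the extremal tail $t_{\max}(wd)$ is itself an admissible infinite sequence starting with $d$, hence an infinite continued fraction, hence strictly less than $d+1$. The same argument gives $t_{\min}(w(d+1))>d+1$. Nonemptiness of each $\mathcal{K}$ is also needed. It follows because any admissible prefix can be extended forever, for instance by appending the periodic pattern $\overline{1,3,1,3,1,4,1,4}$ or a suitable shift of it. I would check by hand that no forbidden word is created at the junction, since the forbidden words all involve $4$'s adjacent to $1$'s in specific patterns.

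Finally, I would transport the strict tail inequality through the monotone Möbius map. This gives two disjoint closed intervals, $T[wd]$ and $T[w(d+1)]$, and the open interval strictly between the images of $t_{\max}(wd)$ and $t_{\min}(w(d+1))$ is the required gap.
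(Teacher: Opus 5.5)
Your proposal is correct and follows the paper's route. Both arguments pass to the tail coordinate through the strictly monotone map $t\mapsto[a_0;a_1,\dots,a_n,t]$. Both then separate the two branches by noting that tails starting with $d$ are all $<d+1$, while tails starting with $d+1$ are all $>d+1+\tfrac15$ because partial quotients are at most $4$.

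Your compactness and nonemptiness remarks make explicit what the paper takes for granted when it defines $T[w]$ by a min and a max. Nonemptiness is even simpler than your periodic extension: appending $\overline{2}$ never creates a forbidden word, since none of the forbidden words contains the digit $2$.
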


\begin{proof}
    Let
    \[
    \Phi_w(t):=[a_0;a_1,\dots,a_n,t].
    \]
    This function is continuous and strictly monotonic.

    For any tail of the form $t=[d;b_1,b_2,\dots]=d+\frac{1}{[b_1;b_2,\dots]}$ the following inequalities hold
    \[
    d+\frac{1}{5}<t<d+1.
    \]

    Similarly, for any tail $t$ of the form $[d+1;c_1,c_2,\dots]$ the following holds
    \[
    d+1+\frac{1}{5}<t<d+2.
    \]

    Therefore, since 
    \[
    T[wd]\subset[\Phi_w(d+\frac{1}{5}),\Phi_w(d+1)], \\
    T[w(d+1)]\subset[\Phi_w(d+1+\frac{1}{5}),\Phi_w(d+2)],
    \]
    and the function $\Phi_w$ is strictly monotonic and continuous, the sets $T[wd]$ and $T[w(d+1)]$ do not intersect and there is an interval between them.
\end{proof}

\begin{definition}
    For any admissible prefix $w$ we will call the longest beginning of a forbidden word at its end its suffix. We will denote its suffix as $\sigma(w)$.
\end{definition}

The full list of suffixes is
\[
S:=\{\emptyset,4, 41, 413, 414, 4131, 4141, 41314, 41413, 413141, 414131 \}.
\]

For example, for the admissible prefix $w=(4,3,2,4,1,4)$ we have $\sigma(w)=414$. Whereas for the prefix $w=(4,3,2,2,1)$ we have $\sigma(w)=\emptyset$.

The following table shows which digits can be added to an admissible prefix with given suffix and which suffix the new prefix has. Be "-" we denote the digits that cannot be added to the end.

\[
\begin{array}{c|c|c|c|c|c}
\text{prefix }\sigma & 1 & 2 & 3 & 4 & \text{admissible digits} \\ \hline
\varnothing & \varnothing & \varnothing & \varnothing & 4 & 1,2,3,4 \\
4           & 41          & \varnothing & \varnothing & - & 1,2,3 \\
41          & \varnothing & \varnothing & 413         & 414 & 1,2,3,4 \\
413         & 4131        & \varnothing & \varnothing & 4 & 1,2,3,4 \\
414         & 4141        & \varnothing & \varnothing & - & 1,2,3 \\
4131        & \varnothing & \varnothing & \varnothing & 41314 & 1,2,3,4 \\
4141        & \varnothing & \varnothing & 41413       & - & 1,2,3 \\
41314       & 413141      & \varnothing & \varnothing & - & 1,2,3 \\
41413       & 414131      & \varnothing & \varnothing & 4 & 1,2,3,4 \\
413141      & \varnothing & \varnothing & 413         & - & 1,2,3 \\
414131      & \varnothing & \varnothing & \varnothing & - & 1,2,3
\end{array}
\]

From this table we can see that the set of admissible digits is always either $\{1,2,3\}$ or $\{1,2,3,4\}$.

\begin{definition}
    For a given prefix $\sigma$ we denote $D(\sigma)$ the digit $3$ or $4$, such that the admissible digits for $\sigma$ are $\{1,2,\dots,D(\sigma)\}$
\end{definition}

Now we will describe a Cantor set and prove that it is $F_1$.

We start with the closed interval $I_0:=T[4,3]$. Let $T[w]$, where $w$ is an admissible prefix, be a closed interval we have on some step. Let
\[
D:=D(\sigma(w)).
\]
Then $wr$, where $1\leq r\leq D$ are all admissible prefixes.

By $J(w,r)$ we denote the smallest closed interval containing $T[wr],T[w(r+1)],\dots,T[wD]$. By lemma \ref{lem:intersection} for $r<D$ the closed intervals $T[wr]$ and $J(w,r+1)$ do not intersect, and there is an interval between them. We denote this interval $G(w,r)$.

If $D=4:$
\begin{itemize}
    \item We delete $G(w,1)$ and have $T[w1]$ and $J(w,2)$ left.
    \item We delete $G(w,2)$ and have $T[w2]$ and $J(w,3)$ left.
    \item We delete $G(w,3)$ and have $T[w3]$ and $T[w4]$ left.
\end{itemize}

If $D=3:$
\begin{itemize}
    \item We delete $G(w,1)$ and have $T[w1]$ and $J(w,2)$ left.
    \item We delete $G(w,2)$ and have $T[w2]$ and $T[w3]$ left.
\end{itemize}

Eventually we get the following set
\[
F:
=
I_0
\setminus
\bigcup_{\substack{w\ \text{is an admissible prefix}\\[1mm] 1\le r < D(\sigma(w))}}
G(w,r).
\]

\begin{lemma}
    $F=F_1$.
\end{lemma}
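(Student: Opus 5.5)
We prove the two inclusions $F_1\subset F$ and $F\subset F_1$ separately. The key structural fact is that the construction is nested: at each step the interval $T[w]$ is replaced by the intervals $T[wr]$, $1\le r\le D(\sigma(w))$. The deleted gaps $G(w,r)$ lie inside $T[w]$ but outside every $T[wr]$. By induction on the length of $w$, for each $n$ the set remaining after all deletions at levels $<n$ is the union of $T[w]$ over admissible prefixes $w$ of length $n$. Here we use that $T[w]=\bigcup_r T[wr]\cup\bigcup_r G(w,r)$ in the sense that $J(w,1)=T[w]$. This holds because $\min\mathcal K(w)$ and $\max\mathcal K(w)$ are attained inside $\mathcal K(w1)$ or $\mathcal K(wD)$, since $\mathcal K(w)=\bigcup_r\mathcal K(wr)$ for $r\le D(\sigma(w))$. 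Every admissible prefix has at least one admissible continuation, because the table shows that digits $1,2$ are always allowed and lead to a nonempty suffix state. Consequently $\mathcal K(w)\neq\emptyset$ and $\mathcal K(w)=\bigsqcup_{r}\mathcal K(wr)$. Hence
\[
F=\bigcap_{n}\ \bigcup_{|w|=n} T[w].
\]

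For $F_1\subset F$, take $x=[4;3,a_2,\dots]\in F_1$. Every initial segment $w_n=(4,3,a_2,\dots,a_n)$ is an admissible prefix and $x\in\mathcal K(w_n)\subset T[w_n]$. So $x$ lies in the level-$n$ union for every $n$. Equivalently, $x$ never falls into a deleted gap: if $x\in T[w]$ then $x\in T[wr]$ for the correct next digit $r$, and $G(w,r')$ is disjoint from $T[wr]$ by Lemma \ref{lem:intersection}.

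For $F\subset F_1$, take $x\in F$. By the disjointness in Lemma \ref{lem:intersection}, for each $n$ there is a unique admissible prefix $w_n$ of length $n$ with $x\in T[w_n]$, and these prefixes extend each other. They therefore define an infinite word $(4,3,a_2,a_3,\dots)$ with digits in $\{1,2,3,4\}$ containing no forbidden word, because every finite segment of it is admissible. Let $y=[4;3,a_2,\dots]\in F_1$. Both $x$ and $y$ lie in $T[w_n]$. It remains to show $|T[w_n]|\to 0$. Every element of $T[w_n]$ lies between $\Phi_{w_n}(t)$ for tails $t\in[1,5]$, and the length of this image is at most $C/q_n^2$, where $q_n$ is the denominator of the $n$-th convergent. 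Since $q_n\ge F_{n+1}\to\infty$, the length tends to $0$. Hence $x=y\in F_1$.

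The main obstacle is the bookkeeping identity $T[w]\supseteq\bigcup_r T[wr]$ with matching endpoints, that is, $\min\mathcal K(w)=\min\mathcal K(w1)$ or $\min\mathcal K(w)=\min\mathcal K(wD)$ depending on the parity of $n$, and similarly for the maximum. This rests on checking from the suffix table that each state has a nonempty set of infinite continuations, so that each $\mathcal K(wr)$ is nonempty. We also need $\mathcal K(wr)$ to be compact, so that its min and max exist. Compactness follows because $F_1$ is defined by closed conditions on digit sequences (forbidding finite words) and the continued-fraction map is continuous on the compact sequence space $\{1,2,3,4\}^{\mathbb N}$.
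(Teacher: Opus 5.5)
Your proof is correct and follows the same route as the paper. The paper shows $F_1\subset F$ by observing that a point of $F_1$ cannot lie in a deleted gap $G(w,r)$, since its next digit would then be non-admissible. It shows $F\subset F_1$ through the nested intervals $T[w_n]$ with $|T[w_n]|\to0$. Your proof adds the details the paper leaves implicit: $J(w,1)=T[w]$, nonemptiness and compactness of $\mathcal K(w)$, and the explicit $O(q_n^{-2})$ bound.

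One wording fix: appending the digit $2$ always leads to the \emph{empty} suffix $\varnothing$, not a nonempty one. The clean justification for $\mathcal K(w)\neq\emptyset$ is that appending $2,2,2,\dots$ never creates a forbidden word, since none of them contains $2$.
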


\begin{proof}
    \begin{itemize}
        \item Let $x\in F_1$. Assume that $x$ got deleted at the step where we delete $G(w,r)$. This implies that $x\in J(w,r)$. This means that the first $n$ digits of $x$, where $n$ is some natural number form the admissible prefix $w$, but its next digit in at least $r$. However, the fact that $x\notin T[wr]\cup J(w,r+1)$ implies that the first $n+1$ digits of the continued fraction expansion of $x$ do not form an admissible prefix, which leads to a contradiction. Therefore, $x\in F$.
        
        \item Let $x\in F$. As we construct our Cantor set F we get an infinite sequence of closed intervals $I_n=T[4,3,a_2,a_3,\dots]$, such that $x\in I_n$ for all $n$. The sequence $4,3,a_2,a_3,\dots$ does not contain any forbidden words. Therefore, $y:=[4;3,a_2,a_3,\dots]\in F_1$. Since $|I_n|\to 0$ as $n\to \infty$ and $x\in I_n$, $x=y$. Thus, $x\in F_1$
    \end{itemize}
\end{proof}

\subsection{Thickness of $\ln F_1$}

\begin{definition}
    Let $I$ be a closed interval. Let $K\subset I$ be a Cantor set, and $G_1,G_2,\dots$ be the deleted intervals. For a fixed order of deleted intervals we can represent a closed interval from which $G_n$ is deleted as
    \[
    B_n^L\ \cup G_n \ \cup B_n^R.
    \]
    For this given order $\mathcal{P}$ we define
    \[
    \tau_{\mathcal P}(K)
    =
    \inf_n
    \min\left\{
    \frac{|B_n^L|}{|G_n|},
    \frac{|B_n^R|}{|G_n|}
    \right\}.
    \]
    And
    \[
    \tau(K):=\sup_\mathcal{P}\tau_{\mathcal{P}}(K).
    \]
\end{definition}

We will use ~\ref{Hall's theorem} to prove that
\begin{equation}
    \ln F_1+\ln F_1=[2\ln L_F,2\ln U_F].
\end{equation}

From this it follows that
\begin{equation}
    F_1*F_1=[L_F^2,U_F^2].
\end{equation}

To apply this theorem it is sufficient to show that
\begin{equation}
    \tau(\ln F_1)>1.
\end{equation}

Which is what we will do in this section.

\begin{definition}
    For a given prefix $\sigma$ we will denote $\mathcal{H}_\sigma$ the set of all tails of an admissible word whose prefix is $\sigma$.

    Let
    \[
    m_\sigma:=\min \mathcal{H}_\sigma, \ M_\sigma:=\max \mathcal{H}_\sigma.
    \]
    By $T(\sigma,d)$ we will denote the prefix of a word whose suffix is $\sigma$ after an admissible digit $d$ is added to its end.
\end{definition}

Clearly $m_\sigma$ and $M\sigma$ satisfy the following system of equations
\begin{equation}\label{system1}
    m_\sigma=1+\frac{1}{M_{T(\sigma,1)}},\ M_\sigma=D(\sigma)+\frac{1}{m_{T(\sigma,D(\sigma))}}.
\end{equation}

Let
\[
\rho:=\sqrt{67773}, \ S:=\sqrt{195}.
\]

Finding the positive solutions to system~(\ref{system1}) yields the following result.
\[
\begin{array}{c|c|c}
\sigma & m_\sigma & M_\sigma \\ \hline
\varnothing
& \dfrac{167+\rho}{354}
& \dfrac{3(187+\rho)}{278}
\\[2mm]
4
& \dfrac{551+3\rho}{1102}
& \dfrac{3(171+\rho)}{338}
\\[2mm]
41
& \dfrac{167+\rho}{354}
& \dfrac{551+3\rho}{278}
\\[2mm]
413
& \dfrac{9+S}{19}
& \dfrac{3(187+\rho)}{278}
\\[2mm]
414
& \dfrac{187+\rho}{354}
& \dfrac{3(171+\rho)}{338}
\\[2mm]
4131
& \dfrac{167+\rho}{354}
& \dfrac{10+S}{5}
\\[2mm]
4141
& \dfrac{167+\rho}{354}
& \dfrac{3(167+\rho)}{338}
\\[2mm]
41314
& \dfrac{10+S}{19}
& \dfrac{3(171+\rho)}{338}
\\[2mm]
41413
& \dfrac{171+\rho}{342}
& \dfrac{3(187+\rho)}{278}
\\[2mm]
413141
& \dfrac{167+\rho}{354}
& \dfrac{9+S}{6}
\\[2mm]
414131
& \dfrac{167+\rho}{354}
& \dfrac{3(171+\rho)}{338}
\end{array}
\]

For further calculations we will need the following result
\begin{equation}
    \frac{U_F}{L_F}<\frac{253}{250}.
\end{equation}

Let $w=(a_0,\dots,a_n)$ be an admissible word, $\sigma:=\sigma(w)$, $D:=D(w)$. Let $1\leq r<D$. We will consider a step in the construction of the Cantor set $F_1$, where we delete the interval $G(w,r)$ and have the closed intervals $T[wr]$ and $J(w,r+1)$ left. Let
\[
A=r+\frac{1}{M_{T(\sigma,r)}},
\qquad
B=r+\frac{1}{m_{T(\sigma,r)}},
\]
\[
C=r+1+\frac{1}{M_{T(\sigma,r+1)}},
\qquad
D_0=D+\frac{1}{m_{T(\sigma,D)}}.
\]

Then
\[
A<B<C<D_0.
\]

Let
\[
\varepsilon_n:=\frac{q_{n-1}}{q_n}.
\]

If
\[
\Phi_w(t):=[a_0;a_1\dots,a_n,t],
\]
then
\[
|\Phi_w(x)-\Phi_w(y)|=\frac{|x-y|}{q_n^2(x+\varepsilon_n)(y+\varepsilon_n)}.
\]
Therefore,
\begin{equation}\label{G/T}
    \frac{|G(w,r)|}{|T[wr]|}
=
\frac{C-B}{B-A}\cdot\frac{A+\varepsilon_n}{C+\varepsilon_n},
\end{equation}

\begin{equation}\label{G/J}
    \frac{|G(w,r)|}{|J(w,r+1)|}
=
\frac{C-B}{D_0-C}\cdot\frac{D_0+\varepsilon_n}{B+\varepsilon_n}.
\end{equation}

The RHS of ~(\ref{G/T}) is strictly increasing when $\varepsilon_n>0$. The RHS of ~(\ref{G/J}) is strictly decreasing when $\varepsilon_n>0$.

From the formula
\[
q_k=a_kq_{k-1}+q_{k-2}
\]
if follows that
\[
\varepsilon_k=\frac{1}{a_k+\varepsilon_{k-1}}.
\]
Therefore,
\begin{equation}\label{eps bounds}
    \frac{1}{5}\leq\varepsilon_k\leq1.
\end{equation}

Suppose $\emptyset\neq\sigma=b_1\dots b_s$. Since every admissible prefix starts with $4,3$ and every non-empty suffix starts with $4$, the suffix $\sigma$ starts at least from the second position. Thus, $\varepsilon_{n-s}$ satisfies ~(\ref{eps bounds}). For each non-empty $\sigma$, starting from the interval $[\frac{1}{5},1]$ and applying the function 
\[
x\rightarrow \frac{1}{b_j+x}
\]
to its ends, we can get the bounds for $\varepsilon_n$. The following table lists all possible suffixes $\sigma$, the corresponding
range of $\varepsilon_n$, and the quantity $\lambda_\sigma$, defined as the
maximum of the expressions~(\ref{G/T}) and~(\ref{G/J}) over all admissible
values of $r$. The value $r_{\max}$ denotes the value of $r$ at which this
maximum is attained. The last column gives a strict rational upper bound for
$\lambda_\sigma$.

\[
\renewcommand{\arraystretch}{1.15}
\begin{array}{c|c|c|c|c}
\sigma
&
\text{range of }\varepsilon_n
&
\lambda_\sigma\ \text{(numerical)}
&
r_{\max}
&
\text{exact bound}
\\ \hline

\varnothing
& \frac{1}{5}\leq\varepsilon_n\leq 1
& 0.954098432
& 3
& <\frac{191}{200}
\\

4
& \frac{1}{5}\leq\varepsilon_n\leq\frac{5}{21}
& 0.811003230
& 2
& <\frac{203}{250}
\\

41
& \frac{21}{26}\leq\varepsilon_n\leq\frac{5}{6}
& 0.988509148
& 3
& <\frac{989}{1000}
\\

413
& \frac{6}{23}\leq\varepsilon_n\leq\frac{26}{99}
& 0.951276492
& 3
& <\frac{119}{125}
\\

414
& \frac{6}{29}\leq\varepsilon_n\leq\frac{26}{125}
& 0.810545803
& 2
& <\frac{811}{1000}
\\

4131
& \frac{99}{125}\leq\varepsilon_n\leq\frac{23}{29}
& 0.984037458
& 3
& <\frac{197}{200}
\\

4141
& \frac{125}{151}\leq\varepsilon_n\leq\frac{29}{35}
& 0.817216358
& 2
& <\frac{409}{500}
\\

41314
& \frac{29}{139}\leq\varepsilon_n\leq\frac{125}{599}
& 0.810430951
& 2
& <\frac{811}{1000}
\\

41413
& \frac{35}{134}\leq\varepsilon_n\leq\frac{151}{578}
& 0.951261675
& 3
& <\frac{119}{125}
\\

413141
& \frac{599}{724}\leq\varepsilon_n\leq\frac{139}{168}
& 0.777563310
& 2
& <\frac{389}{500}
\\

414131
& \frac{578}{729}\leq\varepsilon_n\leq\frac{134}{169}
& 0.778043179
& 2
& <\frac{779}{1000}
\end{array}
\]

From this table it is easy to see that the following fact holds.

\begin{lemma}\label{max estimates}
    For all $(\sigma,r)$, except $(\sigma,r)=(41,3)$ the following inequality holds
    \begin{equation}
        \max\left\{\frac{|G(w,r)|}{|T[wr]|},\frac{|G(w,r)|}{|J(w,r+1)|}\right\}<\frac{197}{200}.
    \end{equation}
    For $(\sigma,r)=(41,3)$ the following is true
    \begin{equation}
         \max\left\{\frac{|G(w,r)|}{|T[wr]|},\frac{|G(w,r)|}{|J(w,r+1)|}\right\}<\frac{989}{1000}.
    \end{equation}
\end{lemma}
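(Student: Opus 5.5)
The plan is to derive the lemma from the table preceding it, by justifying each row as a rigorous bound. Fix an admissible prefix $w$ with suffix $\sigma=\sigma(w)$ and a digit $1\le r<D(\sigma)$. By (\ref{G/T}) and (\ref{G/J}), the ratios $|G(w,r)|/|T[wr]|$ and $|G(w,r)|/|J(w,r+1)|$ are explicit functions of the four constants $A<B<C<D_0$ and of the single parameter $\varepsilon_n$. The constants $A,B,C,D_0$ are built from $m_{T(\sigma,\cdot)}$ and $M_{T(\sigma,\cdot)}$, which the table of solutions of system (\ref{system1}) gives as numbers in $\mathbb{Q}(\rho)$ or $\mathbb{Q}(S)$. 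Hence, for fixed $(\sigma,r)$, everything except $\varepsilon_n$ is a concrete algebraic number.

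The first step is to pin down the admissible range of $\varepsilon_n$ for each suffix $\sigma$.
\begin{itemize}
\item For $\sigma=\varnothing$ we use (\ref{eps bounds}) directly.
\item For $\sigma=b_1\dots b_s$ nonempty, we start from $\varepsilon_{n-s}\in[\frac15,1]$, which is valid because the suffix begins after position $1$. We then iterate the decreasing Möbius maps $x\mapsto 1/(b_j+x)$. Each map sends an interval to an interval with endpoints swapped, so $s$ applications give exactly the ranges listed in the table.
\end{itemize}

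The second step uses monotonicity. As noted, the RHS of (\ref{G/T}) is increasing in $\varepsilon_n$, because $(A+\varepsilon)/(C+\varepsilon)$ increases when $A<C$. The RHS of (\ref{G/J}) is decreasing, because $(D_0+\varepsilon)/(B+\varepsilon)$ decreases when $D_0>B$. So over the range $[\varepsilon^-,\varepsilon^+]$, the maximum of (\ref{G/T}) is attained at $\varepsilon^+$ and the maximum of (\ref{G/J}) at $\varepsilon^-$. This reduces the lemma to finitely many evaluations: for each of the $11$ suffixes, each $r<D(\sigma)$, and each of the two expressions, we evaluate one explicit algebraic number. The maxima over $r$ are the values $\lambda_\sigma$, attained at $r_{\max}$. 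The row $\sigma=41$ with $r=3$ gives the largest value, about $0.9885$, and every other pair $(\sigma,r)$ stays below $\lambda_{4131}\approx0.98404<\frac{197}{200}$.

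The main obstacle is making these comparisons rigorous rather than numerical, especially the tight ones: $0.98851<0.989$ for $(41,3)$, and $0.98404<0.985$ for $\sigma=4131$. For each needed inequality, the expression is a rational function of $\varepsilon^\pm\in\mathbb{Q}$ and of $\rho$ or $S$, with positive denominators. Clearing denominators turns it into an inequality of the form $p+q\rho>0$ with $p,q\in\mathbb{Q}$. Such an inequality is decided exactly by sign analysis and squaring, e.g. $q\rho>-p$ with both sides positive reduces to $67773q^2>p^2$. Alternatively, one can use rigorous rational enclosures such as $260.332<\rho<260.333$ and check that the margin survives interval arithmetic; the margins of at least $10^{-3}$ make this feasible.

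For pairs $(\sigma,r)$ not attaining $r_{\max}$, one confirms that they are also below $\frac{197}{200}$. Most are far below, so coarse rational bounds suffice. Collecting the rational upper bounds in the last column of the table completes the proof of both statements of Lemma \ref{max estimates}.
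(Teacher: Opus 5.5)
Your proposal is correct and follows the paper's route. The paper reads the lemma directly off the table of $\lambda_\sigma$, whose rows are obtained exactly as you describe: iterate $x\mapsto 1/(b_j+x)$ to get the $\varepsilon_n$-range, then use the monotonicity of the two ratio formulas to evaluate at the endpoints. You are slightly more careful than the paper in explicitly flagging $(41,1)$ and $(41,2)$, which the max-over-$r$ table does not directly cover; these give about $0.41$ and $0.46$. Two small slips do not affect validity, since your interval-arithmetic alternative handles both. First, the tight expressions for $(41,3)$ and $(4131,3)$ mix $m_{413}$ or $m_{41314}\in\mathbb{Q}(\sqrt{195})$ with quantities in $\mathbb{Q}(\rho)$, so they do not reduce to a single inequality $p+q\rho>0$. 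Second, the margins are about $4.9\cdot 10^{-4}$ and $9.6\cdot 10^{-4}$, not at least $10^{-3}$.
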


To prove that $\tau(\ln F_1)>1$ we will need the following lemma.

\begin{lemma}\label{ln lemma}
    Let $E,H$ be two subintervals of a closed interval $P=[u,v]$, where $u>0$. Then
    \[
    \frac{|\ln E|}{|\ln H|}\leq \frac{v}{u}\frac{|E|}{|H|}.
    \]
\end{lemma}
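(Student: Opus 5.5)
The plan is to use the mean value theorem for the logarithm separately on the two intervals. Write $E=[e_1,e_2]$ and $H=[h_1,h_2]$, both contained in $P=[u,v]$ with $u>0$. Here $\ln E$ denotes the image interval $[\ln e_1,\ln e_2]$, so
\[
|\ln E|=\ln e_2-\ln e_1, \qquad |\ln H|=\ln h_2-\ln h_1 .
\]
If $H$ is degenerate the right-hand side is infinite and there is nothing to prove, so assume $|H|>0$.

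By the mean value theorem there are points $\xi\in[e_1,e_2]$ and $\eta\in[h_1,h_2]$ with
\[
|\ln E|=\frac{|E|}{\xi}, \qquad |\ln H|=\frac{|H|}{\eta}.
\]
If $E$ is degenerate, both sides of the desired inequality are $0$, so the formula is only needed when $|E|>0$. Alternatively, one can use the bounds
\[
\frac{|E|}{v}\le|\ln E|\le\frac{|E|}{u}, \qquad |\ln H|\ge\frac{|H|}{v},
\]
which follow from $1/t\in[1/v,1/u]$ on $P$ after integrating $1/t$ over each interval.

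Dividing the upper bound for $|\ln E|$ by the lower bound for $|\ln H|$ gives
\[
\frac{|\ln E|}{|\ln H|}\le\frac{|E|/u}{|H|/v}=\frac{v}{u}\frac{|E|}{|H|},
\]
which is the claimed inequality.

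There is no real obstacle here. The only point needing care is the convention that $|\ln E|$ means the length of the image of $E$ under $\ln$; this is legitimate because $\ln$ is increasing on $(0,\infty)$, so it maps intervals to intervals. In later use, $u$ and $v$ will be $L_F$ and $U_F$, and the earlier bound $U_F/L_F<253/250$ makes the factor $v/u$ small. Combined with Lemma~\ref{max estimates}, this gives gap-to-bridge ratios in $\ln F_1$ below $\frac{253}{250}\cdot\frac{989}{1000}<1$, that is, thickness greater than $1$.
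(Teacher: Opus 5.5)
Your proof is correct and follows the paper's argument: the paper also applies Lagrange's mean value theorem to write $|\ln E|=|E|/\xi_E$ and $|\ln H|=|H|/\xi_H$, then bounds $\xi_H/\xi_E\le v/u$, which is exactly the content of your bounds $|E|/v\le|\ln E|\le|E|/u$ and $|\ln H|\ge |H|/v$. Your closing aside, however, is wrong, though this does not affect the lemma: $\frac{253}{250}\cdot\frac{989}{1000}=\frac{250217}{250000}>1$, which is why the paper uses the bound $\frac{197}{200}$ only for $(\sigma,r)\neq(41,3)$ and treats $(\sigma,r)=(41,3)$ separately, taking $P$ to be the short interval containing $T[w]$ (with $u>4$ and $v-u<\frac{1}{256}$) so that $v/u<\frac{1025}{1024}$.
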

\begin{proof}
    By Lagrange's theorem there exists $\xi_E\in E$ and $\xi_H\in H$, such that
    \[
    |\ln E|=\frac{|E|}{\xi_E}, \ |\ln H|=\frac{|H|}{\xi_H}.
    \]
    Therefore,
    \[
    \frac{|\ln E|}{|\ln H|}=\frac{|E|}{|H|}\frac{\xi_H}{\xi_E}\leq \frac{v}{u}\frac{|E|}{|H|}.
    \]
\end{proof}

\begin{lemma}
    $\tau(\ln F_1)>1$
\end{lemma}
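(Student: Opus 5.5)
We take the Cantor construction of $F_1$ from the previous subsection, transport it to $\ln F_1$ by the increasing map $\ln$, and use the same deletion order. Since $\ln$ is a strictly increasing homeomorphism of $I_0=T[4,3]$ onto $\ln I_0$, the closed intervals $T[w]$, $J(w,r)$ and the gaps $G(w,r)$ map to closed intervals $\ln T[w]$, $\ln J(w,r)$ and open gaps $\ln G(w,r)$. The construction then exhibits $\ln F_1$ as a Cantor set on $\ln I_0$ whose deleted intervals are exactly the $\ln G(w,r)$, taken in the same order $\mathcal P$. At the step deleting $\ln G(w,r)$, the two adjacent remaining intervals are $\ln T[wr]$ and $\ln J(w,r+1)$. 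Hence it suffices to show
\[
\frac{|\ln G(w,r)|}{|\ln T[wr]|}<1,\qquad \frac{|\ln G(w,r)|}{|\ln J(w,r+1)|}<1
\]
with a uniform margin, i.e. both ratios at most some constant $\theta<1$ independent of $(w,r)$. Then $\tau(\ln F_1)\geq\tau_{\mathcal P}(\ln F_1)\geq 1/\theta>1$.

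To get this bound, apply Lemma~\ref{ln lemma} with $P=[u,v]=I_0=[L_F,U_F]$. All the intervals $G(w,r)$, $T[wr]$, $J(w,r+1)$ lie in $I_0$, and $u=L_F>0$. Lemma~\ref{ln lemma} is stated for closed intervals, but it applies verbatim to the closure of $G(w,r)$, which has the same length. This gives
\[
\frac{|\ln G(w,r)|}{|\ln T[wr]|}\leq\frac{U_F}{L_F}\cdot\frac{|G(w,r)|}{|T[wr]|},\qquad
\frac{|\ln G(w,r)|}{|\ln J(w,r+1)|}\leq\frac{U_F}{L_F}\cdot\frac{|G(w,r)|}{|J(w,r+1)|}.
\]
Now combine $U_F/L_F<253/250$ with Lemma~\ref{max estimates}. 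For $(\sigma,r)\neq(41,3)$ both ratios are below
\[
\frac{253}{250}\cdot\frac{197}{200}=\frac{49841}{50000}<1.
\]
For $(\sigma,r)=(41,3)$ they are below
\[
\frac{253}{250}\cdot\frac{989}{1000}=\frac{250217}{250000}.
\]
This exceeds $1$, so the crude bound fails, and this case is the main obstacle.

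For the exceptional case I would replace the global interval $P=I_0$ by a local one: take $P=J(w,3)$, the smallest closed interval containing $T[w3]$ and $T[w4]$. It contains $G(w,3)$, $T[w3]$ and $J(w,4)=T[w4]$. By the derivative formula for $\Phi_w$, the ratio of its endpoints is $1+O(q_n^{-2})$. More robustly, when the prefix $w$ is long enough, $J(w,3)$ sits deep inside $I_0$. Every admissible prefix begins with $4,3$, and its endpoints are $\Phi_w$ evaluated at tails in $[3+1/5,5]$. Bounding $\Phi_w(5)/\Phi_w(3.2)$ through the difference formula gives
\[
\frac{v}{u}\leq 1+\frac{1.8}{q_n^2 L_F(3.2+\varepsilon_n)(5+\varepsilon_n)}.
\]
Since $\sigma=41$ forces $n\geq 3$ (the prefix is $4,3,\dots,4,1$), we have $q_n\geq q_3$ with $a_1=3$, $a_2\geq1$, $a_3=4$ or so. This makes the ratio tiny, certainly less than $1000/989$.

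With that, both ratios are below a uniform $\theta<1$, say $\theta=\max\{49841/50000,\ 989\cdot(1+\delta)/1000\}$ for the small explicit $\delta$ obtained above. Therefore $\tau_{\mathcal P}(\ln F_1)\geq1/\theta>1$, and hence $\tau(\ln F_1)>1$.

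A side check: the hypothesis of Hall's theorem is that each gap is no longer than either neighbour. In the $\ln$-scale this is exactly the bound above, so the same argument also directly supplies the condition used afterwards.
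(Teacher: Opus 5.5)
Your proof is correct and follows the paper's argument. The global choice $P=I_0$, combined with $U_F/L_F<253/250$ and Lemma~\ref{max estimates}, handles every case except $(\sigma,r)=(41,3)$, and that case is rescued by shrinking $P$ to an interval of length $O(q_n^{-2})$: the paper takes $P=T[w]$ with $|T[w]|<q_n^{-2}\le 1/256$ and $u>4$, while you take $J(w,3)$ and use the difference formula for $\Phi_w$. Your bound on $q_n$ (``$a_3=4$ or so'') is loose, but this does not matter, since $q_n\ge q_1=3$ and $L_F>4$ already give $v/u\le 1+1.8/576<1000/989$.
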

\begin{proof}
    Let $G$ be the deleted interval, and $H$ be any of the two closed intervals left. First assume $(\sigma,r)\neq (41,3)$ Using lemmas ~\ref{max estimates}, ~\ref{ln lemma} and the inequality $\frac{U_F}{L_F}<\frac{253}{250}$, we get
    \[
    \frac{|\ln G|}{|\ln H|}\leq\frac{U_F}{L_F}\frac{|G|}{|H|}<\frac{253}{250}\frac{197}{200}=\frac{48941}{50000}<1.
    \]

    Now we consider the case $\sigma=41, r=3$. The corresponding prefix $w=(a_0,\dots,a_n)$ ends with the digits $4,1$. Thus, 
    \[
    q_n=q_{n-1}+q_{n-2}=(4q_{n-2}+q_{n-3})+q_{n-2}=5q_{n-2}+q_{n-3}\geq 16.
    \]
    For $|T[w]|$ the following holds
    \[
    |T[w]|\leq \frac{1}{q_n(q_n+q_{n-1})}< \frac{1}{q_n^2}\leq \frac{1}{256}.
    \]
    Therefore, $G$ and $H$ are contained in a closed interval $P=[u,v]$, where $u>4$, and $v-u< \frac{1}{256}$. Thus
    \[
    \frac{v}{u}=1+\frac{v-u}{u}<1+\frac{1}{1024}=\frac{1025}{1024}.
    \]
    Now we apply lemma ~\ref{ln lemma} to the intervals $P,G,H$. This yields
    \[
    \frac{|\ln G|}{|\ln H|}\leq \frac{v}{u}\frac{|G|}{|H|}<\frac{1025}{1024}\frac{989}{1000}=\frac{40549}{40960}<1.
    \]
    Therefore,
    \[
    \tau(\ln F_1)>1.
    \]
\end{proof}

\subsection{Proof of theorem \ref{ray upper bound}}

Consider the set
\[
\mathcal{Q}:=\{41414, 4131414, 4141314\}.
\]
Notice that the inverse of every word in $\mathcal{Q}$ also belongs to $\mathcal{Q}$. Let $\mathcal{W}$ be the set of all infinite continued fractions
\[
[a_0;a_1,a_2,\dots], \ a_i\in \{1,2,3,4\},
\]
and the sequence of the partial quotients of does not contain any word from $\mathcal{Q}$. Let
\[
M_4 := \max_{w\in\mathcal{W}} w,
\]
\[
M_3 := \max\left\{
[a_0;a_1,\ldots]\in\mathcal{W} : a_0\leq 3
\right\}.
\]
It is easy to see that
\[
M_4=[\overline{4;1,4,1,3,1,3,1}]=\frac{3(187+\sqrt{67773})}{278},
\]
\[
M_3=[3;\overline{1,4,1,4,1,3,1,3}]=\frac{3(171+\sqrt{67773})}{338}.
\]
Let
\[
C:=M_4M_3=\frac{448875+1611\sqrt{67773}}{46982}.
\]
Then the following inequalities hold
\[
L_F^2<C<10+6\sqrt{2}<U_F^2.
\]
Therefore, for any $\alpha\in [C,10+6\sqrt{2}]$ there exist $X=[x_0;x_1,x_2,\dots], \ Y=[y_0;y_1,y_2,\dots]\in F_1$, such that $\alpha=XY$. Let
\[
S_i:=(x_i,x_{i-1},\dots,x_1,x_0,y_0,y_1,\dots,y_{i-1},y_i),
\]
\[
\xi:=[S_1,2,S_2,2,S_3,2,\dots].
\]
Now we will show that if
\[
\xi=[z_0;z_1,z_2,\dots],
\]
then
\[
d:=d(\xi)=\alpha.
\]
Let $k_i$ be the index of the element $x_0$ inside of $S_i$ that preceeds $y_0$. So $(z_{k_i},z_{k_i+1})=(4,4)$.
Clearly,
\[
d\geq\alpha.
\]

\begin{lemma}\label{partial quotients of xi}
    \begin{itemize}
        \item The sequence of partial quotients of $\xi$ does not contain any words from $\mathcal{Q}$.
        \item All appearances of $4,4$ in $\{z_n\}$ are of the type $(z_{k_i},z_{k_i+1})=(4,4)$.
    \end{itemize}
\end{lemma}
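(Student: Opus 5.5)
The plan is to argue purely combinatorially from the block structure of $\xi$. Write
\[
\xi=[S_1,2,S_2,2,S_3,2,\dots], \qquad S_i=(x_i,\dots,x_1,x_0,y_0,y_1,\dots,y_i),
\]
where $X=[x_0;x_1,\dots]$ and $Y=[y_0;y_1,\dots]$ lie in $F_1$. By the definition of $F_1$ we have $x_0=y_0=4$ and $x_1=y_1=3$. All digits of $x_j$ and $y_j$ are in $\{1,2,3,4\}$. Neither $(x_0,x_1,\dots)$ nor $(y_0,y_1,\dots)$ contains $44$ or any word of $\mathcal{Q}$. Each $S_i$ thus splits as a left half $L_i=(x_i,\dots,x_0)$, a reversed prefix of $X$, and a right half $R_i=(y_0,\dots,y_i)$, a prefix of $Y$. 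The central pair $(x_0,y_0)$ is exactly $(z_{k_i},z_{k_i+1})=(4,4)$.

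First key observation: no word of $\mathcal{Q}=\{41414,4131414,4141314\}$ contains the digit $2$, and none contains the factor $44$. Hence any occurrence of a $\mathcal{Q}$-word in $(z_n)$ avoids the separating $2$'s, so it lies inside a single block $S_i$. Since it contains no $44$, it cannot contain the central pair $(z_{k_i},z_{k_i+1})$. It therefore lies entirely inside $L_i$ or entirely inside $R_i$.

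If it lies in $R_i$, it is a factor of $(y_0,y_1,\dots)$, which is impossible because $Y\in F_1$. If it lies in $L_i$, then its reversal is a factor of $(x_0,\dots,x_i)$. Since every word of $\mathcal{Q}$ has its reversal in $\mathcal{Q}$ (as noted before the lemma), this contradicts $X\in F_1$. This proves the first item.

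For the second item, the same decomposition applies. An occurrence $(z_n,z_{n+1})=(4,4)$ cannot involve a separator $2$, so it lies within some $S_i$. If it is not the central pair, it lies in $L_i$ or in $R_i$. That would give a factor $44$ of $(x_0,x_1,\dots)$ (reversal of $44$ is $44$) or of $(y_0,y_1,\dots)$, contradicting $44$ being forbidden in $F_1$. So every $44$ is of the form $(z_{k_i},z_{k_i+1})$.

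I expect no real obstacle. The only care needed is to check that nothing spans a block junction or the central pair, and both are excluded by the absence of $2$ and of $44$ in the words of $\mathcal{Q}$.
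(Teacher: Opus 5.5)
Your proof is correct and follows the paper's own argument. An occurrence of a $\mathcal{Q}$-word or of $44$ cannot cross a separating $2$, and a $\mathcal{Q}$-word also cannot cover the central pair $(x_0,y_0)$, so it would have to lie in one half of some $S_i$. That is excluded by $X,Y\in F_1$ together with the reversal-closedness of $\mathcal{Q}$, and you simply spell out this case analysis a bit more explicitly than the paper does.
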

\begin{proof}
    \begin{itemize}
        \item The "left side" of any block $S_i$, which corresponds to $X$ does not contain any words from $\mathcal{Q}$, since the sequence of partial quotients of $X$ does not contain them and $\mathcal{Q}$ is closed under word reversal. Similarly the "right side", which corresponds to $Y$ does not contain them either. A word from $\mathcal{Q}$ cannot be between these two parts, since no word contains $44$. Moreover, it cannot be between blocks $S_i,S_{i+1}$, since no word contains $2$. Therefore, the sequence $\{z_n\}$ does not contain any words from $\mathcal{Q}$.
        \item Inside the left and right side of blocks $S_i$ $44$ does not appear, since $X,Y\in F_1$. It cannot appear between blocks, since the number $2$ is between them. Thus, it can only appear at spots of the type $(z_{k_i},z_{k_i+1})$.
    \end{itemize}
\end{proof}

The following lemma finishes the proof.

\begin{lemma}
    Let $n_i$ be any sequence of natural numbers that approaches infinity, such that there exists
    \[
    d':=\lim_{i\to\infty}d_{n_i}(\xi).
    \]
    Then $d'\leq \alpha$.
\end{lemma}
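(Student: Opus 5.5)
We split according to how the positions $n_i$ sit relative to the special positions $k_j$. Recall
\[
d_n(\xi)=[z_n;z_{n-1},\dots,z_0][z_{n+1};z_{n+2},\dots].
\]
Passing to a subsequence, we may assume one of three cases.

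\textbf{Case (a): $n_i=k_{j_i}$ for all $i$.} The two factors are
\[
[x_0;x_1,\dots,x_{j_i},2,\dots] \quad\text{and}\quad [y_0;y_1,\dots,y_{j_i},2,\dots].
\]
Since $j_i\to\infty$, their first $j_i$ partial quotients agree with those of $X$ and $Y$. Hence they converge to $X$ and $Y$, and $d'=XY=\alpha$.

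\textbf{Case (b): $n_i$ is at bounded distance from $k_{j_i}$ but $n_i\neq k_{j_i}$.} Passing to a further subsequence, the offset $n_i-k_{j_i}=c\neq 0$ is constant. The limit is then an explicit product determined by $X$ and $Y$. For example, $c=1$ gives
\[
[y_0;x_0,x_1,\dots]\,[y_1;y_2,\dots]=(4+1/X)\,[y_1;y_2,\dots]<5\cdot\tfrac{1}{[0;y_1,\dots]}\cdots.
\]
Such products are small, roughly $\le 5\cdot 4$ or less. We bound them crudely: with $z_n\le 3$ we get at most $4\cdot(4+1)$, and the actual constraint $y_1=3$ gives something like $3.x\cdot 4.x$. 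This must be checked against $\alpha\ge C\approx 18.48$. The crude bound is not enough by itself, so we use the product form $[z_n;\dots][z_{n+1};\dots]$ with $z_n\le 3$ or $z_{n+1}\le 3$, which is treated in case (c).

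\textbf{Case (c): the generic position, $z_n,z_{n+1}$ not both $4$.} By the second part of Lemma \ref{partial quotients of xi}, the pair $(z_n,z_{n+1})$ is never $(4,4)$ except at $k_j$, so at least one of $z_n,z_{n+1}$ is at most $3$. By the first part of Lemma \ref{partial quotients of xi}, both the backward word $z_n,z_{n-1},\dots$ and the forward word $z_{n+1},z_{n+2},\dots$ avoid $\mathcal{Q}$. Here the closedness of $\mathcal{Q}$ under reversal is used for the backward word. The backward word is finite, but it is long in the limit and can be extended, for instance by a $2$-tail, without creating words from $\mathcal{Q}$.

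Hence in the limit one factor lies in $\mathcal{W}$ with leading digit at most $3$, giving a value at most $M_3$. The other factor lies in $\mathcal{W}$, giving a value at most $M_4$. Therefore
\[
d'\le M_4M_3=C\le\alpha.
\]
The $2$'s between blocks only lower the values, since they are admissible digits for $\mathcal{W}$. The limits of finite continued fractions lie in the closure of $\mathcal{W}$, and the suprema are attained there by the periodic points.

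\textbf{Main obstacle.} The delicate point is the finiteness and closure issue in case (c): a convergent subsequence of $[z_n;\dots,z_0]$ has a limit whose digit string is a limit of words avoiding $\mathcal{Q}$. Such a limit still avoids $\mathcal{Q}$ because $\mathcal{Q}$ consists of finite words. When the backward word is of bounded length, its value is a rational whose expansion still avoids $\mathcal{Q}$, and it is bounded by the maximum of $\mathcal{W}$ with the same first digit. This holds because $\max\mathcal{W}$ is also the supremum over finite admissible strings; we check this by a standard comparison of continued fractions.

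With case (c) in hand, case (b) reduces to it, since $(z_n,z_{n+1})\neq(4,4)$ there. So every $d'$ is at most $\alpha$.
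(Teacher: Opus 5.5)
Your proposal is correct and is essentially the paper's argument. The positions $n_i=k_j$ give the limit $XY=\alpha$, and at every other position $(z_n,z_{n+1})\neq(4,4)$, so one factor has leading digit at most $3$ and is bounded by $M_3$ while the other is bounded by $M_4$, giving $d'\le M_3M_4=C\le\alpha$. Your uniform treatment of the non-$(4,4)$ positions and your $\overline{2}$-tail limiting argument for the finite backward factor are, if anything, tidier than the paper's ``assume $(3,4)$'' and its direct inequality for a finite continued fraction. Two small cleanups: case (b) and the ``bounded length'' remark in your last paragraph can simply be dropped. That remark is in fact false (for example $[3;1]=4>M_3$), but it never applies, since $n_i\to\infty$.
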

\begin{proof}
    By lemma ~\ref{partial quotients of xi} $(z_{n_i},z_{n_i+1})\neq(4,4)$, unless $n_i=k_j$. We can assume that this is not the case and $(z_{n_i},z_{n_i+1})=(3,4)$. So, since $\{z_n\}$ does not contain any words from $\mathcal{Q}$, the following inequalities hold.
    \[
    [z_{n_i};z_{n_i-1},\dots,z_1]\leq M_3,
    \]
    \[
    [z_{n_i+1};z_{n_i+2,\dots}]\leq M_4.
    \]
    And the product of these two numbers does not exceed $M_3M_4=C\leq\alpha$.
\end{proof}

\section{Lower bound of the origin of the ray}

\subsection{The plan}

Let
\[
\mathcal{D}(a,b):=\{d(x) \ |\ x\in M(a,b)\}.
\]

It is known that (see \cite{Ivanov1978}) on the interval
\[
\left(
3+\sqrt{3},\,
\frac{9+2\sqrt{21}}{3}
\right)=(4.732\dots,6.055\dots)
\]
$\mathcal{D}$ consists of $\mathcal{D}(3,1)$ and the points
\[
e_n:=4[\overline{(1,3)_n,1,4}]+1.
\]
Using standard methods for comparing two continued fractions it is easy to show that $\{e_n\}$ is strictly increasing. Moreover,
\[
e_0=3+2\sqrt{2}=5.8284\dots,
\]
\[
e_1=\frac{59+4\sqrt{195}}{19}=6.0451\dots
\]
Therefore, this sequence does not intersect with the interval $(L,R)$. All that's left to do now is show that $\mathcal{D}(3,1)$ does not intersect with it either. This is what the rest of this section in dedicated to.

\subsection{Auxiliary facts}

Consider the following set of two-sided sequences
\[
\Sigma:=\{(a_j)_{j\in \mathbb{Z}}\ |\ a_j\in\{1,2,3\},\ 23,32,33\ \text{do not appear in this sequence}\}.
\]
For $\mathbf{a}=(a_j)_{j\in \mathbb{Z}}$ we define
\[
\lambda_j(\mathbf{a}):=[a_j;a_{j-1},a_{j-2},\dots][a_{j+1};a_{j+2},a_{j+3}\dots].
\]
Similarly, we will call a finite or infinite sequence of digits $\{1,2,3\}$ admissible if it does not contain $23,32,33$.

\begin{lemma}\label{two-sided sequence lemma}
    Let $x\in M(3,1)$, and
    \[
    z:=d(x).
    \]
    Then there exists a two-sided sequence $\mathbf{a}=(a_j)_{j\in \mathbb{Z}}\in\Sigma$, such that
    \[
    \lambda_0(\mathbf{a})=z,
    \]
    and for all $j\in \mathbb{Z}$
    \[
    \lambda_j(a)\leq z.
    \]
\end{lemma}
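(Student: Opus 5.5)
The plan is a standard compactness argument in the shift space of partial quotients. Write $x=[x_0;x_1,x_2,\dots]\in M(3,1)$. By definition of $M(3,1)$ we have $\limsup x_n=3$, and the blocks $3,2$, $2,3$, $3,3$ occur only finitely often. Hence there is $N$ such that for $n\ge N$ all $x_n\in\{1,2,3\}$ and the tail $(x_n)_{n\ge N}$ contains none of $23,32,33$. Since $z=d(x)=\limsup d_n(x)$, I would choose a sequence $n_i\to\infty$ with $d_{n_i}(x)\to z$.

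For each $i$ consider the shifted two-sided word $\mathbf{a}^{(i)}$ defined by $a^{(i)}_j:=x_{n_i+j}$ for $j\ge N-n_i$, padded arbitrarily (say by $1$'s) for smaller $j$. The space $\{1,2,3\}^{\mathbb{Z}}$ with the product topology is compact. By a diagonal argument I would extract a subsequence converging coordinatewise to some $\mathbf{a}=(a_j)_{j\in\mathbb{Z}}$. Every finite window of $\mathbf{a}$ agrees with a window of $(x_n)_{n\ge N}$ for large $i$, so $23,32,33$ do not appear in $\mathbf{a}$ and $\mathbf{a}\in\Sigma$.

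The key analytic step is continuity. Continued fractions with partial quotients in $\{1,2,3\}$ depend continuously on the digits: if two sequences agree in their first $m$ entries, their values differ by at most $c\,\theta^m$ with $\theta<1$, since denominators grow at least like Fibonacci numbers. The finite reversed fraction $[x_{n};x_{n-1},\dots,x_1]$ agrees with $[a_j;a_{j-1},\dots]$ (for $n=n_i+j$) in the first $n_i+j-N$ digits. Its remaining digits $x_{N-1},\dots,x_1$ may be large, but they enter only at depth $\ge n_i+j-N$, so they are harmless. More precisely, $[b_0;b_1,\dots,b_m,t]$ for any $t\ge1$, or a finite truncation, lies in the $m$-th cylinder interval, whose length is at most $1/q_m^2$. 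The forward factor $[x_{n+1};x_{n+2},\dots]$ likewise converges to $[a_{j+1};a_{j+2},\dots]$. Hence $d_{n_i+j}(x)\to\lambda_j(\mathbf{a})$ along the subsequence for each fixed $j$.

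Taking $j=0$ gives $\lambda_0(\mathbf{a})=\lim d_{n_i}(x)=z$. For general $j$, since $n_i+j\to\infty$, we get $\lambda_j(\mathbf{a})=\lim_i d_{n_i+j}(x)\le\limsup_n d_n(x)=z$. The only step needing care is the uniform continuity estimate for the backward factor, because the finitely many initial digits $x_0,\dots,x_{N-1}$ may be unbounded. I expect to handle this with the cylinder-length bound: both the finite fraction and its infinite limit lie in the same cylinder of depth $n_i+j-N\to\infty$, whose length tends to $0$ because all digits in that prefix are at most $3$.
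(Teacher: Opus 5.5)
Your proposal is correct and follows essentially the same route as the paper: pick $n_i$ with $d_{n_i}(x)\to z$, diagonalize so that every finite window of digits around $n_i$ stabilizes, take the limiting two-sided word $\mathbf a$, and pass to the limit in $d_{n_i+j}(x)$. You also justify $\mathbf a\in\Sigma$ and the convergence of both factors via cylinder lengths, which the paper leaves implicit; one small slip is that the backward fraction agrees in its first $n_i+j-N$ digits with the shifted word $\mathbf a^{(i)}$, not with $\mathbf a$, but it does agree with $\mathbf a$ in a number of digits tending to infinity with $i$, which is all your cylinder-length argument needs.
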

\begin{proof}
    Let $\{n_k\}_{k\in\mathbb{N}}$ be a sequence, such that
    \[
    \lim_{k\to\infty}d_{n_k}(x)=z.
    \]
    Let $\{n_k^1\}_{k\in\mathbb{N}}$ be a subsequence of $\{n_k\}_{k\in\mathbb{N}}$, such that for all $k$ the numbers
    \[
    x_{n_k^1-1},\ x_{n_k^1},\ x_{n_k^1+1}
    \]
    are fixed. Similarly, for any $r\geq2$, let $\{n_k^r\}_{k\in\mathbb{N}}$ be a subsequence of $\{n_k^{r-1}\}_{k\in\mathbb{N}}$, such that for all $k$ the numbers
    \[
    x_{n_k^r-r},\dots,\ x_{n_k^r-1},\ x_{n_k^r},\ x_{n_k^r+1},\dots,\ x_{n_k^r+r}
    \]
    are fixed.

    Let
    \[
    m_k:=n_k^k.
    \]
    Then for all $r$ for all $k$ large enough the numbers
    \[
    x_{m_k-r},\dots,\ x_{m_k-1},\ x_{m_k},\ x_{m_k+1},\dots,\ x_{m_k+r}
    \]
    are fixed. Let 
    \[
    a_j:=x_{m_k+j},
    \]
    where $k$ is sufficiently large, such that this number is fixed. Let
    \[
    \mathbf{a}:=(a_j)_{j\in\mathbb{Z}}\in\Sigma.
    \]

    Let $j\in\mathbb{Z}$ be fixed. Then
    \[
    d_{m_k+j}(x)=[x_{m_k+j};x_{m_k+j-1},\dots x_1][x_{m_k+j+1};x_{m_k+j+2},\dots].
    \]
    By definition of $\{m_k\}$ we have
    \[
    [x_{m_k+j};x_{m_k+j-1},\dots x_1]\to [a_j;a_{j-1},a_{j-2},\dots],
    \]
    \[
    [x_{m_k+j+1};x_{m_k+j+2},\dots]\to [a_{j+1};a_{j+2},\dots],
    \]
    as $k\to\infty$. Therefore,
    \[
    \lambda_j(\mathbf{a})=\lim_{k\to\infty}d_{m_k+j}(x)\leq z.
    \]
    Moreover,
    \[
    \lambda_0(\mathbf{a})=\lim_{k\to\infty}d_{m_k}(x)=z.
    \]
\end{proof}

For $j\in\{1,2,3\}$ let $X_j$ be the set of all numbers of the form
\[
[j;b_1,b_2,\dots],
\]
such that the sequence $j,b_1,b_2,\dots$ is admissible. Let
\[
m_j:=\min X_j, \ M_j:=\max X_j.
\]
We have
\[
m_1=[1;\overline{3,1}]=\frac{3+\sqrt{21}}{6}, \ M_1=[1;\overline{1,3}]=\frac{-1+\sqrt{21}}{3},
\]
\[
m_2=[2;2,\overline{1,3}]=\frac{19+\sqrt{21}}{10}, \ M_2=[2;\overline{1,3}]=\frac{1+\sqrt{21}}{2},
\]
\[
m_3=[3;1,\overline{1,3}]=\frac{31+\sqrt{21}}{10}, \ M_3=[3;\overline{1,3}]=\frac{3+\sqrt{21}}{2}.
\]

We can continue any finite admissible sequence $u=(u_0,u_1,\dots,u_m)$ to an infinite admissible sequence. If $u_m=d$ we denote by $J_d$ the closed interval $[\alpha,\beta]$, such that $\alpha=[a_0;a_1,\dots]$ is the minimal number for which the sequence $u_0,u_1,\dots,u_m,a_0,a_1,\dots$ is admissible. And $\beta=[b_0;b_1,\dots]$ is the maximal number, such that the sequence $u_0,u_1,\dots,u_m,b_0,b_1,\dots$ is admissible. We have
\[
J_1=[m_1,M_3],
\]
\[
J_2=[m_1,M_2],
\]
\[
J_3=[m_1,M_1].
\]

For an admissilbe sequence $u=(u_0,u_1,\dots,u_m)$ let
\[
J_{u_m}=[\alpha,\beta].
\]
We can easily find the interval of all values of the expression
\[
[u_0;u_1,\dots,u_m,t], \ t\in J_{u_m}
\]
using the following algorithm. We start off with the closed interval
\[
[l,h]:=[\alpha,\beta].
\]
Then for $i=m,m-1,\dots,0$ we replace
\[
[l,h]\xrightarrow{}\left[u_i+\frac{1}{h},u_i+\frac{1}{l}\right].
\]
Since all the functions of the type $t\xrightarrow{}u_i+\frac{1}{t}$ are strictly decreasing for $t>0$, the final interval $[l,h]$ will be our target. We will denote this interval $I(u)$.

Consider an admissible sequence
\[
w=(w_{-r+1},\dots,w_0,w_1,\dots,w_r)
\]
of length $2r$. For $p\in\{-r+1,\dots,r\}$ we define
\[
I_p^-:=I(w_p,w_{p-1},\dots,w_{-r+1}),
\]
\[
I_p^+(w)=
\begin{cases}
I(w_{p+1},w_{p+2},\ldots,w_r), & p<r,\\[1mm]
J_{w_r}, & p=r.
\end{cases}
\]
Let
\[
I_p^-(w)=[l_p^-,h_p^-], \ I_p^+(w)=[l_p^+,h_p^+].
\]
And 
\[
\underline{\lambda}_p(w):=l_p^-l_p^+, \ \overline{\lambda}_p(w):=h_p^-h_p^+.
\]
Then the following lemma holds
\begin{lemma}\label{continuation ineq lemma}
    Let $\mathbf{a}$ be any two-sided admissible continuation of $w=(w_{-r+1},\dots,w_0,w_1,\dots,w_r)$. Then for any $p\in\{-r+1,\dots,r\}$ the following inequalities hold
    \[
    \underline{\lambda}_p(w)\leq\lambda_p(\mathbf{a})\leq\overline{\lambda}_p(w).
    \]
\end{lemma}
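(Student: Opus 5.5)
The plan is to reduce the claim to the interval algorithm defining $I(u)$ above. Fix $p\in\{-r+1,\dots,r\}$ and write
\[
\lambda_p(\mathbf{a})=[a_p;a_{p-1},a_{p-2},\dots]\cdot[a_{p+1};a_{p+2},\dots].
\]
Both factors are positive, since every digit is at least $1$. It therefore suffices to show two containments:
\[
[a_p;a_{p-1},\dots]\in I_p^-(w)=[l_p^-,h_p^-],\qquad [a_{p+1};a_{p+2},\dots]\in I_p^+(w)=[l_p^+,h_p^+].
\]
Multiplying these inequalities between positive numbers then gives $l_p^-l_p^+\le\lambda_p(\mathbf{a})\le h_p^-h_p^+$, which is exactly the statement.

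\textbf{The backward factor.} Let $u=(w_p,w_{p-1},\dots,w_{-r+1})$, with last digit $u_m=w_{-r+1}$. Then
\[
[a_p;a_{p-1},\dots]=[u_0;u_1,\dots,u_m,t],\qquad t=[a_{-r};a_{-r-1},\dots].
\]
Here I need $t\in J_{w_{-r+1}}$. The sequence $\mathbf{a}$ is admissible, so the reversed sequence $w_{-r+1},a_{-r},a_{-r-1},\dots$ is an admissible one-sided sequence. This uses that the forbidden set $\{23,32,33\}$ is closed under reversal, since $23$ and $32$ swap and $33$ is symmetric. Hence $u_0,\dots,u_m,a_{-r},a_{-r-1},\dots$ is an admissible infinite continuation of $u$. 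By the definition of $J_{u_m}$ as the interval between the minimal and maximal admissible continuation values, $t\in J_{u_m}$. The values $\alpha,\beta$ depend only on $u_m$, because admissibility is a condition on adjacent pairs; the listed formulas for $J_1,J_2,J_3$ confirm this.

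\textbf{The map from tails to values.} Next I would show by induction that the algorithm produces an interval containing the image. At each step the map $t\mapsto u_i+1/t$ is strictly decreasing on $t>0$. It therefore sends a point of $[l,h]$ into $[u_i+1/h,\,u_i+1/l]$, and positivity is preserved. Iterating for $i=m,\dots,0$ gives $[u_0;\dots,u_m,t]\in I(u)$. This proves the backward containment.

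\textbf{The forward factor.} For $p<r$, the same argument applies to $u=(w_{p+1},\dots,w_r)$ with tail $t=[a_{r+1};a_{r+2},\dots]$. No reversal is needed, and admissibility gives $t\in J_{w_r}$. For $p=r$, the factor is $[a_{r+1};a_{r+2},\dots]$ itself. Since $w_r,a_{r+1},\dots$ is admissible, this value lies in $J_{w_r}=I_r^+$ by definition.

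The only point needing care is the backward factor: we must check that the reversed tail is admissible and that $J_d$ depends only on the last digit $d$. Both follow directly from the pairwise nature of the forbidden words. Everything else is monotonicity.
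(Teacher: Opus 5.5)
Your proof is correct and follows the paper's argument: you bound each factor of $\lambda_p(\mathbf{a})$ by the corresponding interval $I_p^{\pm}(w)$ and multiply the positive bounds. The paper only says these two containments hold ``by definition''. Your checks — that $\{23,32,33\}$ is closed under reversal, that $J_d$ depends only on the last digit, and that the decreasing maps $t\mapsto u_i+1/t$ carry the tail interval into $I(u)$ — fill in exactly what that phrase leaves implicit.
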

\begin{proof}
    Let
    \[
    \mathbf{a}=(\dots,b_1,b_0,w_{-r+1},\dots,w_{p-1},w_p,w_{p+1},\dots,w_r,a_0,a_1,\dots).
    \]
    Then
    \[
    \lambda_p(\mathbf{a})=[w_p;w_{p-1},\dots,w_{-r+1},b_0,b_1,\dots][w_{p+1};\dots,w_r,a_0,a_1,\dots].
    \]
    By definition we have
    \[
    l_p^-\leq[w_p;w_{p-1},\dots,w_{-r+1},b_0,b_1,\dots]\leq h_p^-,
    \]
    \[
    l_p^+\leq [w_{p+1};\dots,w_r,a_0,a_1,\dots]\leq h_p^+.
    \]
    Multiplying these inequalities yields the wanted result.
\end{proof}

\begin{definition}
    We will call an admissible sequence
    \[
    w=(w_{-r+1},\dots,w_0,w_1,\dots,w_r)
    \]
    surviving if
    \[
    \overline{\lambda}_0(w)>L, \text{and} \ \underline{\lambda}_p(w)<R \ (-r+1\leq p\leq r).
    \]
    By $S_r$ we will denote the set of all surviving sequences of length $2r$.
\end{definition}

\begin{lemma}\label{inheritence lemma}
    Let $v=bwc$ be an admissible sequence, where $b,c$ are digits. If $v$ survives, then $w$ survives.
\end{lemma}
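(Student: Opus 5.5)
The plan is a monotonicity argument: extending $w$ on either side can only shrink the enclosing intervals $I_p^\pm$. Write $v=(v_{-r},\dots,v_{r+1})$ with $v_{-r}=b$, $v_{r+1}=c$, and $v_p=w_p$ for $-r+1\le p\le r$. Note that $w$ is admissible, being a subword of the admissible $v$. It suffices to show, for every $p\in\{-r+1,\dots,r\}$,
\[
I_p^-(v)\subseteq I_p^-(w),\qquad I_p^+(v)\subseteq I_p^+(w).
\]
This gives $l_p^\pm(w)\le l_p^\pm(v)$ and $h_p^\pm(v)\le h_p^\pm(w)$. Since all endpoints are positive, we then get $\underline{\lambda}_p(w)\le\underline{\lambda}_p(v)<R$ and $\overline{\lambda}_0(w)\ge\overline{\lambda}_0(v)>L$. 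These are exactly the survival conditions for $w$.

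The cleanest way to get the inclusions is through the characterization of $I(u)$ as a set of values. For an admissible $u=(u_0,\dots,u_m)$, the interval $I(u)$ is the set of values $[u_0;u_1,\dots,u_m,t]$ with $t\in J_{u_m}$. By the definition of $J_{u_m}$, its endpoints are the extreme values of $[u_0;u_1,\dots,u_m,a_0,a_1,\dots]$ over infinite admissible continuations $a_0a_1\dots$ of $u$. So $I(u)$ is the convex hull of the set of values of all admissible infinite continuations of $u$, namely $\min$ and $\max$ over that set, which is attained since the set is compact. Similarly, for $p=r$ we have $I_r^+(w)=J_{w_r}$, the hull of values of admissible continuations after $w_r$.

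Now take $p<r$. Then $I_p^+(v)=I(w_{p+1},\dots,w_r,c)$ is the hull of the values of admissible continuations of $(w_{p+1},\dots,w_r,c)$. Each such continuation $(w_{p+1},\dots,w_r,c,a_0,\dots)$ is also an admissible continuation of $(w_{p+1},\dots,w_r)$, because admissibility of the whole sequence is what is required. Hence its value lies in $I_p^+(w)$, and passing to hulls gives the inclusion. For $p=r$, $I_r^+(v)=I(c)$, and every number $[c;a_0,\dots]$ with $w_r,c,a_0,\dots$ admissible is an admissible continuation value after $w_r$. Hence it lies in $J_{w_r}=I_r^+(w)$. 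The argument for $I_p^-$ is identical, since reading $v$ leftwards appends $b$ at the far end and admissibility is symmetric.

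The only delicate point is justifying the characterization of $I(u)$ as extremes over continuations. The algorithm applies the decreasing maps $t\mapsto u_i+1/t$ to the endpoints of $J_{u_m}$. Their composition is monotone in $t$, so the image of the interval $J_{u_m}$ is exactly the interval between the images of its endpoints. Those endpoints are attained by admissible continuations, so the image interval is the hull of the continuation values. Here admissibility depends only on adjacent pairs: $J_{u_m}$ depends only on $u_m$, and appending digits imposes constraints only at the new junction. Once this is recorded, the lemma follows directly.
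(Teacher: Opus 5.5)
Your proof is correct and follows the same route as the paper: both reduce the survival of $w$ to the inequalities $\underline{\lambda}_p(w)\le\underline{\lambda}_p(v)$ and $\overline{\lambda}_p(w)\ge\overline{\lambda}_p(v)$ for $-r+1\le p\le r$ (with the center index $0$ unchanged). The paper simply asserts these inequalities, while you justify them through the nesting $I_p^\pm(v)\subseteq I_p^\pm(w)$, which you obtain by viewing $I(u)$ as the hull of the values of admissible continuations of $u$; this is a useful supplement to the paper's proof.
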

\begin{proof}
    For all $p$ we have
    \[
    \underline{\lambda}_p(w)\leq\underline{\lambda}_p(v), \ \overline{\lambda}_p(w)\geq\overline{\lambda}_p(v).
    \]
    Therefore,
    \[
    L<\overline{\lambda}_0(v)\leq\overline{\lambda}_0(w),
    \]
    and
    \[
    \underline{\lambda}_p(w)\leq\underline{\lambda}_p(v)<R.
    \]
    Thus, $w$ survives.
\end{proof}

\begin{lemma}\label{surviving word lemma}
    Let $\mathbf{a}\in\Sigma$ satisfy
    \[
    L<\lambda_0(\mathbf{a})<R, \ \lambda_j(\mathbf{a})<R \ (j\in \mathbb{Z}).
    \]
    Then for all $r\geq 1$ the sequence
    \[
    w=(a_{-r+1},\dots,a_0,a_1,\dots,a_r)
    \]
    survives.
\end{lemma}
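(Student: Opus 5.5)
The plan is to deduce the surviving conditions for $w$ directly from Lemma \ref{continuation ineq lemma}, using the fact that the whole sequence $\mathbf{a}$ is itself an admissible two-sided continuation of $w$.

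Fix $r\geq 1$ and set $w=(a_{-r+1},\dots,a_r)$. Since $\mathbf{a}\in\Sigma$, every finite block of $\mathbf{a}$ avoids $23,32,33$, so $w$ is admissible. Moreover $\mathbf{a}$ is a two-sided admissible continuation of $w$: the index $p$ in $w$ corresponds to the index $p$ in $\mathbf{a}$.

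\emph{Right tail.} For $p<r$, the number $[a_{p+1};a_{p+2},\dots]$ is of the form $[w_{p+1};\dots,w_r,t]$ with $t=[a_{r+1};a_{r+2},\dots]$. Because the sequence $w_r,a_{r+1},a_{r+2},\dots$ is admissible, $t$ lies in $J_{w_r}$. For $p=r$, the number $[a_{r+1};\dots]$ lies in $J_{w_r}$ directly. In both cases the right factor lies in $I_p^+(w)$.

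\emph{Left tail.} The argument is the same, reading leftwards: $[a_p;a_{p-1},\dots]=[w_p;\dots,w_{-r+1},s]$ with $s\in J_{w_{-r+1}}$. The one point to check is that the reversal of an admissible sequence is admissible. This holds because the forbidden set $\{23,32,33\}$ is closed under reversal, so the backward tail is an admissible continuation of $w_{-r+1}$. This is also exactly what makes the definition of $I_p^-$ legitimate, so the left factor lies in $I_p^-(w)$.

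Lemma \ref{continuation ineq lemma} then gives, for all $p\in\{-r+1,\dots,r\}$,
\[
\underline{\lambda}_p(w)\leq\lambda_p(\mathbf{a})\leq\overline{\lambda}_p(w).
\]

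\emph{Conclusion.} Taking $p=0$ gives $\overline{\lambda}_0(w)\geq\lambda_0(\mathbf{a})>L$. For every $p$ we get $\underline{\lambda}_p(w)\leq\lambda_p(\mathbf{a})<R$ by hypothesis. These are precisely the two conditions in the definition of a surviving sequence, so $w\in S_r$.

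There is no real obstacle here. The only step needing care is the bookkeeping: matching the indices of $w$ with those of $\mathbf{a}$, and confirming that both tails of $\mathbf{a}$ lie in the intervals $J_{w_r}$ and $J_{w_{-r+1}}$ used in the definition of $I(\cdot)$.
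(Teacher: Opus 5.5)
Your proposal is correct and follows the same approach as the paper. You apply Lemma~\ref{continuation ineq lemma} with $\mathbf{a}$ itself as the two-sided admissible continuation of $w$, and read off $\overline{\lambda}_0(w)\geq\lambda_0(\mathbf{a})>L$ and $\underline{\lambda}_p(w)\leq\lambda_p(\mathbf{a})<R$. Your extra bookkeeping (the tails lie in $J_{w_r}$ and $J_{w_{-r+1}}$, using that $\{23,32,33\}$ is closed under reversal) only makes explicit what the paper leaves implicit in the proof of Lemma~\ref{continuation ineq lemma}.
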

\begin{proof}
    By lemma ~\ref{continuation ineq lemma} we have
    \[
    L<\lambda_0(\mathbf{a})\leq\overline{\lambda}_0(w),
    \]
    and
    \[
    \underline{\lambda}_p(w)\leq\lambda_p(\mathbf{a})<R
    \]
    for all $p\in \{-r+1,\dots,r\}$. Therefore, $w$ survives.
\end{proof}

Using these lemmas it is easy to find $S_r$ recursively. We start with $r=1$. The possible candidates for surviving sequences are
\[
11,12,13,21,22,31.
\]
If we have candidates for $S_r$ it is easy to see if one of them, say $w$ survives. For that we just need to check the inequalities
\[
\overline{\lambda}_0(w)>L, \ \underline{\lambda}_p(w)<R \ (-r+1\leq p\leq r).
\]
Both $\overline{\lambda}_0(w)$ and $\underline{\lambda}_p(w)$ can be easily calculated using the algorithm provided earlier. Then if we have the full set $S_r$, to find the set $S_{r+1}$ by lemma ~\ref{inheritence lemma} we just have to consider the admissible sequences of the form $bwc$, where $w\in S_r$.

For a finite sequence $u=(u_0,u_1,\dots,u_m)$ we denote by $u^*$ its inverse, so the sequence $(u_m,\dots,u_1,u_0)$. Using the provided algorithm we can run a Python program (see Appendix~\ref{app:code}) that shows that the only sequences from $S_{17}$ are
\[
2112111311113111131112112131313111,
\]
\[
2112111311113111131112112131313121,
\]
\[
2112111311113111131112112131313131,
\]
or their inverses. It is easy to see that the block
\[
C:=21121113111131111311121121=c_0c_1\dots c_{25},
\]
or $C^*$ is contained in every sequence from $S_{17}$.

\begin{lemma}\label{max lemma}
    Suppose $\mathbf{a}=(a_j)_{j\in\mathbb{Z}}\in\Sigma$ and for some $s$
    \[
    a_{s+i}=c_i, \text{or} \ a_{s+i}=c_i^*.
    \]
    Then for some $j$
    \[
    \lambda_j(\mathbf{a})\geq R.
    \]
\end{lemma}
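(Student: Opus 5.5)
By the reversal symmetry it suffices to treat the case $a_{s+i}=c_i$. Reversing a two-sided sequence preserves $\Sigma$, because the forbidden set $\{23,32,33\}$ is closed under reversal. It also maps the family $\{\lambda_j\}$ onto itself: $\lambda_j$ of the reversed sequence equals $\lambda_{-j-1}$ of the original, since both factors are swapped. After shifting indices, assume $a_i=c_i$ for $0\le i\le 25$. The strategy is to locate one distinguished position $p$ inside $C$, presumably one of the $3$'s near the middle of $C$, where the product $\lambda_p$ is largest. The aim is then to show that either $\lambda_p(\mathbf a)\ge R$, or some other $\lambda_j(\mathbf a)\ge R$ already holds.

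\textbf{Step 1.} Compute $\underline{\lambda}_p$ for the word $C$ (and for slightly longer extensions) using the interval algorithm and Lemma~\ref{continuation ineq lemma}. Since $R$ involves $\sqrt{21}$, I expect $R$ to be exactly the value of $\lambda_p$ for the configuration in which $C$ is flanked on one or both sides by the periodic tail $\overline{1,3}$ (or $\overline{3,1}$). Such a tail realizes $m_1$, $M_1$, $M_3$, and so on. I will determine this extremal continuation $\mathbf a^{*}$ explicitly. Then I will check, via a closed-form quadratic-irrational computation, that $\lambda_p(\mathbf a^{*})=R$. This would explain why $R$ has the stated shape.

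\textbf{Step 2.} This is the main step. Show that any deviation of $\mathbf a$ from $\mathbf a^{*}$ outside $C$ forces some $\lambda_j\ge R$. Let $n$ be the first place, on the relevant side, where the continuation differs from $\overline{1,3}$. Since the continued-fraction comparison alternates in sign with the parity of $n$, there are two possibilities.
\begin{itemize}
\item If the parity is favorable, the deviation strictly increases the corresponding factor of $\lambda_p$. Then $\lambda_p(\mathbf a)>\lambda_p(\mathbf a^{*})=R$.
\item If the parity is unfavorable, the deviation must make the local word near position $n$ contain one of the "bad" patterns, such as $2,1,1,2$ or $1,3,1,3$ broken into $1,2$ or $1,1$. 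At the point of the break, a local $\lambda_j$ exceeds $R$. I would verify this with finitely many $\underline{\lambda}_j$ computations on short windows, using the same algorithm as the computer search for $S_r$.
\end{itemize}
To make this work uniformly in $n$, I would argue inductively. Assume the word $C(1,3)^n$ (respectively $(3,1)^nC$ on the left) appears. Using the bounds $J_1,J_2,J_3$, check that the next two digits must be $1,3$, or else some $\underline{\lambda}_j\ge R$. The intervals $I^\pm$ shrink geometrically, so a single finite verification at the first few values of $n$ should propagate to all $n$: past that point, the contribution of $C$ to any $\lambda_j$ near the end is essentially constant. This step is where I expect the real difficulty. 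The margins near $R$ are tiny, and the argument must be organized so that only finitely many checks are needed, possibly with a separate monotonicity argument in $n$.

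\textbf{Step 3.} Conclude. If no deviation ever occurs on either side, then $\mathbf a=\mathbf a^{*}$ and $\lambda_p(\mathbf a)=R$. Otherwise Step 2 produces an index $j$ with $\lambda_j(\mathbf a)\ge R$. In all cases some $\lambda_j(\mathbf a)\ge R$, which is the claim of Lemma~\ref{max lemma}.
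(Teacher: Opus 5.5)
Your reversal reduction is fine, and your Step 1 guess is right. Write $x=[a_{s-1};a_{s-2},\dots]$ and $y=[a_{s+26};a_{s+27},\dots]$. Then $R$ is exactly $\lambda_{s+7}=\lambda_{s+16}$ for the symmetric sequence with $x=m_1$ and $y=M_3$. (The central $3$ gives only about $5.92$ there, so $p$ must be $s+7$ or its mirror $s+16$.)

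Step 2, however, has a genuine gap.

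\textbf{No parity dichotomy.} $m_1$ and $M_3$ are the global minimum and maximum of all admissible tails. So every deviation on the left strictly increases $x$, and every deviation on the right strictly decreases $y$, whatever its depth. Since $\lambda_{s+7}=A_7(x)B_7(y)$ with $A_7,B_7$ increasing, every right-hand deviation lowers $\lambda_{s+7}$. Symmetrically, $\lambda_{s+16}=A_{16}(x)B_{16}(y)$ with both factors decreasing, so every left-hand deviation lowers $\lambda_{s+16}$. There is no ``favorable parity'' case on the bad side.

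\textbf{No local forcing.} Your claim that such a deviation creates a large $\lambda_j$ near the break is false. Take $x=m_1$ and $y=[(3,1)_N,2,1,\overline{3,1}]$, a single $2$ inserted deep in the periodic tail.
\begin{itemize}
\item The values $\lambda_j$ around that $2$ are at most about $M_3\,[1;2,1,\overline{3,1}]\approx 5.15$.
\item $\lambda_{s+7}=A_7(m_1)B_7(y)<R$.
\item For large $N$ the only index where $R$ is reached is the mirror position $s+16$. There $\lambda_{s+16}$ exceeds $R$ only by an amount exponentially small in $N$.
\end{itemize}
So your inductive check (``the next two digits must continue the periodic tail, or else some $\underline{\lambda}_j\ge R$'') fails on every bounded window. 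The lower bound $\underline{\lambda}_{s+16}$ over a window that does not pin the left tail stays below $R$.

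\textbf{The missing idea} is to couple the two mirror positions. When both $x>m_1$ and $y<M_3$, each of $\lambda_{s+7}$ and $\lambda_{s+16}$ gains on one side and loses on the other. Which one ends up $\ge R$ depends on the relative sizes of the two deviations, not on any local pattern, and your plan never handles this case.

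The paper does it in one stroke. It writes $\lambda_{s+7}\lambda_{s+16}=P(x)Q(y)$ with $P=A_7A_{16}$ and $Q=B_7B_{16}$, and checks by differentiation that $P$ is increasing and $Q$ is decreasing. Hence $\lambda_{s+7}\lambda_{s+16}\ge P(m_1)Q(M_3)=R^2$, so $\max\{\lambda_{s+7},\lambda_{s+16}\}\ge R$. No search over continuations and no induction on depth is needed.
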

\begin{proof}
    First assume that $a_{s+i}=c_i$. Let
    \[
    x:=[a_{s-1};a_{s-2},\dots], \ y:=[a_{s+26};a_{s+27},\dots].
    \]
    Since,
    \[
    \lambda_{s+7}(\mathbf{a})=[a_{s+7};a_{s+6},\dots,a_s,x][a_{s+8};a_{s+9},\dots,a_{s+25},y],
    \]
    and
    \[
    \lambda_{s+16}(\mathbf{a})=[a_{s+16};a_{s+15},\dots,a_s,x][a_{s+17};a_{s+18},\dots,a_{s+25},y].
    \]
    It can be easily calculated that
    \[
    A_7(x):=[a_{s+7};a_{s+6},\dots,a_s,x]=\frac{178x+69}{49x+19},
    \]
    \[
    B_7(y):=[a_{s+8};a_{s+9},\dots,a_{s+25},y]=\frac{30287y+21826}{18460y+13303},
    \]
    \[
    A_{16}(x):=[a_{s+16};a_{s+15},\dots,a_s,x]=\frac{21826x+8461}{13303x+5157},
    \]
    \[
    B_{16}(y):=[a_{s+17};a_{s+18},\dots,a_{s+25},y]=\frac{247y+178}{68y+49}.
    \]
    Let
    \[
    P(x):=A_7(x)A_{16}(x), \ Q(y):=B_7(y)B_{16}(y).
    \]
    Differentiation shows that
    \[
    P'(x)=
    \frac{6\bigl(48390426x^2+37517767x+7272011\bigr)}
    {(49x+19)^2(13303x+5157)^2}
    >0
    \qquad (x>0).
    \]
    And
    \[
    Q'(y)=
    -\frac{6\bigl(93180204y^2+134298619y+48390426\bigr)}
    {(68y+49)^2(18460y+13303)^2}
    <0
    \qquad (y>0).
    \]
    Clearly $x\geq m_1$, $y\leq M_3$. Therefore
    \[
    P(x)\geq P(m_1), \ Q(y)\geq Q(M_3).
    \]
    So,
    \[
    \lambda_{s+7}(\mathbf{a})\lambda_{s+16}(\mathbf{a})=P(x)Q(y)\geq P(m_1)Q(M_3)=R^2.
    \]
    Therefore,
    \[
    \max \{\lambda_{s+7}(\mathbf{a}),\lambda_{s+16}(\mathbf{a})\}\geq R.
    \]

    Now we prove the case $a_{s+i}=c_i^*$. Consider the two-sided sequence $\mathbf{b}=(b_j)_{j\in\mathbb{Z}}\in\Sigma$, such that $b_j=a_{-j}$. We have
    \[
    \lambda_j(\mathbf{b})=[b_j;b_{j-1},\dots][b_{j+1};b_{j+2},\dots]=[a_{-j};a_{-j+1},\dots][a_{-j-1};a_{-j-2},\dots]=\lambda_{-j-1}(\mathbf{a}).
    \]
    For $\mathbf{b}$ we have
    \[
    b_{-s-25}=a_{s+25}=c_{25}^*=c_0,b_{-s-24}=a_{s+24}=c_{24}^*=c_1,\dots,b_{-s}=a_s=c_0^*=c_{25}.
    \]
    Let $s':=-s-25$. Then by the first part of this lemma we have
    \[
    R\leq\max \{\lambda_{s'+7}(\mathbf{b}),\lambda_{s'+16}(\mathbf{b})\}=\max \{\lambda_{-s-18}(\mathbf{b}),\lambda_{-s-9}(\mathbf{b})\}=\max \{\lambda_{s+17}(\mathbf{a}),\lambda_{s+8}(\mathbf{a})\}.
    \]
\end{proof}

\begin{lemma}\label{main lemma}
    There does not exist $\mathbf{a}=(a_j)_{j\in\mathbb{Z}}\in\Sigma$, such that
    \[
    L<\lambda_0(\mathbf{a}), \ \lambda_j(\mathbf{a})<R \ (j\in\mathbb{Z}).
    \]
\end{lemma}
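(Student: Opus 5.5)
We argue by contradiction. Suppose $\mathbf{a}=(a_j)_{j\in\mathbb{Z}}\in\Sigma$ satisfies $L<\lambda_0(\mathbf{a})$ and $\lambda_j(\mathbf{a})<R$ for all $j\in\mathbb{Z}$. In particular $L<\lambda_0(\mathbf{a})<R$, so the hypotheses of Lemma~\ref{surviving word lemma} hold. Applying it with $r=17$, the central window
\[
w=(a_{-16},\dots,a_0,a_1,\dots,a_{17})
\]
is a surviving sequence of length $34$, that is, $w\in S_{17}$.

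Next we invoke the computer-assisted classification of $S_{17}$. The recursive search based on Lemmas~\ref{inheritence lemma} and~\ref{surviving word lemma} shows that $w$ is one of the three listed words or the inverse of one of them. Each of the three words begins with the block $C=c_0c_1\dots c_{25}$, and each inverse ends with $C^*$. Hence there is an index $s$ such that either $a_{s+i}=c_i$ for $0\le i\le 25$ (with $s=-16$), or $a_{s+i}=c_i^*$ for $0\le i\le 25$ (with $s=-8$, since $C^*$ occupies the last $26$ positions of the inverted word). This indexing matches the convention used in Lemma~\ref{max lemma}, where $c_i^*$ denotes the $i$-th letter of $C^*$.

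Lemma~\ref{max lemma} now gives some $j$ with $\lambda_j(\mathbf{a})\ge R$. This contradicts the assumption $\lambda_j(\mathbf{a})<R$ for all $j$, so no such $\mathbf{a}$ exists.

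The only step that is not purely formal is the determination of $S_{17}$. It rests on exact arithmetic, which is feasible because all the quantities involved lie in $\mathbb{Q}(\sqrt{21})$. It also rests on the completeness of the recursion: every element of $S_{r+1}$ has the form $bwc$ with $w\in S_r$, by Lemma~\ref{inheritence lemma}. We take both as established by the search. The remaining point to check carefully is the bookkeeping of indices in the inverse case, so that the occurrence of $C^*$ in $w$ matches the hypothesis of Lemma~\ref{max lemma}.
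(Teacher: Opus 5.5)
Your proof is correct and follows the paper's argument exactly: apply Lemma~\ref{surviving word lemma} with $r=17$ to get a surviving central window of length $34$, use the computer-assisted description of $S_{17}$ to locate $C$ or $C^*$ inside $\mathbf{a}$, and conclude with Lemma~\ref{max lemma}. Your explicit index bookkeeping is also correct and fills in a step the paper leaves implicit: $s=-16$ when $C$ occurs, and $s=-8$ when $C^*$ occurs, where $c_i^*$ is the $i$-th letter of $C^*$.
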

\begin{proof}
    Assume that such a two-sided sequence $\mathbf{a}$ exists. Then by lemma ~\ref{surviving word lemma} its central subsequence of length $34$ (meaning the sequence $a_{-16},\dots,a_0,a_1,\dots,a_{17}$), survives. Therefore $\mathbf{a}$ contains $C$ as a subsequence. Then by lemma ~\ref{max lemma} for some $j$ we have $\lambda_j(\mathbf{a})\geq R$. Which is a contradiction.
\end{proof}

\subsection{Proof of theorem \ref{ray lower bound}}

Now the proof of theorem ~\ref{ray lower bound} is trivial. We have to prove that
\[
\mathcal{D}(3,1)\cap (L,R)=\emptyset.
\]
Assume that this is false. Then there exists $x\in M(3,1)$, such that
\[
z:=d(x)\in (L,R).
\]
By lemma ~\ref{two-sided sequence lemma} we can construct a two-sided sequence $\mathbf{a}\in\Sigma$, such that
\[
\lambda_0(\mathbf{a})=z,
\]
and
\[
\lambda_j(\mathbf{a})\leq z<R \ (j\in\mathbb{Z}).
\]
However, by lemma \ref{main lemma} this is impossible. And so we arrive at a contradiction.

\section{Isolated points in Dirichlet spectrum}

\subsection{Preliminaries}

To prove this we will use the results from \cite{Ivanov1978}. We define the numbers
\[
\nu(a,b)=
\begin{cases}
a[\overline{1;a-1}]+1, & \text{if } b=1,\\[2mm]
a[b;a-1,\overline{1,a}]+1, & \text{if } b\ge 2,
\end{cases}
\]
\[
\bar{\nu}(a,b)=
\begin{cases}
[a-1;\overline{1,a}]^2, & \text{if } b\le a-2,\\[2mm]
[\overline{a;1}]\,[b;\overline{1,a}], & \text{if } b\ge a-1.
\end{cases}
\]
These two points are the smallest and largest limit points of $\mathcal{D}(a,b)$, and $\max \mathcal{D}(a,b)=\bar{\nu}(a,b)$. Moreover, the following holds
\[
\mathcal{D}(a,b)\cap(0,\nu(a,b))
=
\begin{cases}
\left\{d\!\left(x^{(n,a)}\right)\right\}_{n=0}^{\infty},
& \text{if } b=1,\\[2mm]
\left\{d\!\left([\overline{a;b}]\right)\right\},
& \text{if } b\ge 2,
\end{cases}
\]
where
\[
x^{(n,a)}
=
[\overline{a;(1,a-1)_n,1}].
\]
For further calculations we will also need
\begin{equation}\label{D(xna)}
    D(x^{(n,a)})=a[\overline{(1;a-1)_n,1,a}]+1,
\end{equation}
\begin{equation}\label{min D(a,b)}
    \min\mathcal{D}(a,b)=b[\overline{a,b}]+1
\end{equation}
for $b\geq2$, and
\begin{equation}\label{min D(a)}
    \min \mathcal{D}(a)=[\overline{a,1}]+1.
\end{equation}

\subsection{Proof of theorem \ref{isolated points}}

As mentioned before, it suffices to show that $P_k< Z_k$ for all $k\geq 4$. Let
\[
n:=2k-3, \quad u:=F_n, \quad v:=F_{n+1}.
\]
Using Cassini's identity and the fact that $n$ is odd, we get
\[
F_{n-1}F_{n+1}-F_n^2=(-1)^n=-1 \implies (v-u)v-u^2=-1,
\]
or
\begin{equation}\label{Cassini}
    v^2=uv+u^2-1.
\end{equation}

For further use we also introduce
\begin{equation}\label{tau}
\tau:=[\overline{1,3}]=\frac{3+\sqrt{21}}{6} \implies 3\tau^2-3\tau-1=0.
\end{equation}

For a positive integer $a$, set
\[
T_a:=
\begin{pmatrix}
a & 1\\
1 & 0
\end{pmatrix}.
\]
If
\[
T_{a_0}\cdots T_{a_r}
=
\begin{pmatrix}
A & B\\
C & D
\end{pmatrix},
\]
then
\[
[a_0;a_1,\ldots,a_r,t]
=
\frac{At+B}{Ct+D}.
\]
Moreover,
\begin{equation}\label{Fibonacci matrix}
T_1^m
=
\begin{pmatrix}
F_{m+1} & F_m\\
F_m & F_{m-1}
\end{pmatrix}.
\end{equation}

\subsubsection{Calculating $P_k$}

Using ~\ref{Fibonacci matrix}, we obtain
\[
T_3T_1^{\,n-2}T_2
=
\begin{pmatrix}
u+3v & 2v-u\\
v & v-u
\end{pmatrix}.
\]
Therefore,
\[
[3,(1)_{n-2},2,\tau]=\frac{N_1}{D_1},
\]
where
\[
N_1=(u+3v)\tau+2v-u,
\]
\[
D_1=v\tau+v-u.
\]
Using \ref{Cassini}, one can show that
\[
T_1^n T_3 T_1^n T_2 T_1^2 T_2
=
\begin{pmatrix}
49u^2+80uv-44 & 19u^2+31uv-17\\
31u^2+49uv-13 & 12u^2+19uv-5
\end{pmatrix}.
\]
Hence,
\[
[(1)_n,3,(1)_n,2,1,1,2,\tau]=\frac{N_2}{D_2},
\]
where
\[
N_2=(49u^2+80uv-44)\tau+19u^2+31uv-17,
\]
\[
D_2=(31u^2+49uv-13)\tau+12u^2+19uv-5.
\]
Using the formula \ref{Pk}, we obtain
\begin{equation}\label{Pk closed form}
    P_k=\frac{N_1N_2}{D_1D_2}.
\end{equation}

\subsubsection{Calculating $Z_k$}

Let
\[
y:=[\overline{(1)_{n+2},3}].
\]
Then
\[
Z_k=1+3y.
\]
From
\[
T_1^{n+2}T_3
=
\begin{pmatrix}
4u+7v & u+2v\\
3u+4v & u+v
\end{pmatrix}.
\]
we get that $y$ satisfies the following quadratic equation
\[
(3u+4v)y^2-3(u+2v)y-(u+2v)=0.
\]
And so $Z_k$ is the largest root of the polynomial
\[
H_k(X):=(3u+4v)(X-1)^2-9(u+2v)X=(3u+4v)X^2-(15u+26v)X+(3u+4v).
\]
Thus
\begin{equation}\label{Zk closed form}
    Z_k=\frac{15u+26v+3\sqrt{(u+2v)(21u+34v)}}{2(3u+4v)}.
\end{equation}
For $H_k$ we have
\[
H_k(0)=3u+4v>0, \quad H_k(1)=-9(u+2v)<0.
\]
Therefore, the smaller root of $H_k$ lies in the interval $(0,1)$. The product of the two root is $1$ by Viet's theorem. Hence, $Z_k>1$, and for any $X>1$ we have
\begin{equation}\label{Hk<0}
    H_k(X)<0 \iff X<Z_k.
\end{equation}
This fact will be crucial for our proof.

\subsubsection{Proof of the inequality $P_k<Z_k$}

Now we finish the proof of theorem \ref{isolated points} by showing that $P_k<Z_k$ for $k\geq 4$.

Let $D:=D_1D_2$. Substituting (\ref{Pk closed form}) into $H_k$ and using the formulas (\ref{Cassini}), (\ref{tau}) we obtain
\begin{equation}\label{Hk(Pk)}
9D^2H_k(P_k)=-(\tau E_1(u,v)+E_0(u,v)),
\end{equation}
where
\[
E_1(u,v)
=
u^3\bigl(406224u^2-2556255\bigr)
+
u^2v\bigl(658188u^2-3630069\bigr)
+
2128467u
+
1097730v,
\]
and
\[
E_0(u,v)
=
u^3\bigl(102732u^2-666542\bigr)
+
u^2v\bigl(166464u^2-950107\bigr)
+
558227u
+
289354v.
\]

Since $k\geq 4$, we have $n=2k-3\geq 5$. Therefore, $u=F_n\geq F_5=5$. Thus,
\[
406224u^2-2556255 \ge 7599345 > 0,
\]
\[
658188u^2-3630069 \ge 12824631 > 0,
\]
\[
102732u^2-666542 \ge 1901758 > 0,
\]
\[
166464u^2-950107 \ge 3211493 > 0.
\]
Since $u,v,\tau>0$ it follows that
\[
\tau E_1(u,v)>0, \quad E_0(u,v)>0.
\]
Consequently, from (\ref{Hk(Pk)}) we obtain $H_k(P_k)<0$. Since $P_k>1$ this implies $P_k<Z_k$ from (\ref{Hk<0}).

\begin{corollary}
    The isolated points $Z_k$ form a strictly increasing sequence, such that
    \[
    Z_k\to Z_\infty:=\frac{5+3\sqrt{5}}{2}.
    \]
    Consequently, $Z_\infty$ is an accumulation point of isolated points of the Dirichlet spectrum.
\end{corollary}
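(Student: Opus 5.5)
The corollary has three parts: the monotonicity of $Z_k$, its limit, and the consequence for accumulation points.

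\emph{Monotonicity.} Theorem~\ref{Ivanov}(2) gives $\mathfrak{m}_{k,1}\prec\tilde{\mathfrak{m}}_{k+1,0}$, and $Z_k=\min\mathfrak{m}_{k,0}$. Monotonicity could be read off this chain, but a direct argument is cleaner. I would compare $y_k=[\overline{(1)_{2k-1},3}]$ with $y_{k+1}=[\overline{(1)_{2k+1},3}]$ using the standard rule for comparing continued fractions. The two expansions first differ at index $2k-1$ (counting from $0$), where $y_k$ has the digit $3$ and $y_{k+1}$ has the digit $1$. Since $2k-1$ is odd, the larger digit at that position gives the smaller number. Hence $y_k<y_{k+1}$, and therefore $Z_k=3y_k+1<Z_{k+1}$.

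\emph{Limit.} I would use the closed form (\ref{Zk closed form}) with $u=F_n$, $v=F_{n+1}$ and $n=2k-3\to\infty$. Dividing numerator and denominator by $u$ and using $v/u\to\varphi=\frac{1+\sqrt5}{2}$ gives
\[
Z_k\to\frac{15+26\varphi+3\sqrt{(1+2\varphi)(21+34\varphi)}}{2(3+4\varphi)}.
\]
To simplify, note $1+2\varphi=\varphi^3$ and $21+34\varphi=\varphi^8$, so the square root equals $\varphi^{11/2}$. Since that exponent is not an integer, this route is a little awkward.

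A more transparent route is to observe that $y_k\to[\overline{1}]=\varphi$, because the expansions agree on longer and longer initial blocks of $1$'s and convergence is uniform. Then $Z_k\to 3\varphi+1=\frac{5+3\sqrt5}{2}$, which matches the closed-form limit above. I would present this second argument, with the closed form kept only as a consistency check.

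\emph{Accumulation point.} By Theorem~\ref{isolated points}, each $Z_k$ is an isolated point of $\mathcal{D}$. The $Z_k$ are pairwise distinct because the sequence is strictly increasing, and they converge to $Z_\infty$. Every neighbourhood of $Z_\infty$ therefore contains infinitely many isolated points of $\mathcal D$.

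Because $\mathcal{D}$ is closed (a standard fact, not needed for the statement), it also follows that $Z_\infty\in\mathcal D$. In fact $Z_\infty$ is the right endpoint of the interval in Theorem~\ref{Ivanov}.

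The only mildly delicate point is getting the parity in the continued-fraction comparison right; the rest is routine.
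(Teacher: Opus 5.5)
Your argument is correct and is essentially the paper's: strict monotonicity by comparing the expansions in (\ref{Zk}) at the odd index $2k-1$, the limit $Z_k\to 3[\overline{1}]+1$, and the accumulation-point claim read off from Theorem~\ref{isolated points}. Two of your asides are off, though neither is used in the proof: first, $21+34\varphi=\varphi^9$ (not $\varphi^8$), so the root is $\varphi^6=8\varphi+5$ and the closed form gives $\frac{15+25\varphi}{3+4\varphi}=3\varphi+1$ exactly (with your $\varphi^{11/2}$ it would not have matched); second, Ivanov's chain by itself does not place $Z_{k+1}$, which is excluded from $\tilde{\mathfrak m}_{k+1,0}$, above $\mathfrak m_{k,1}$ --- that needs the extra input $P_{k+1}<Z_{k+1}$.
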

\begin{proof}
    From formula (\ref{Zk}) it is easy to see that this sequence is strictly increasing. Moreover,
    \[
    Z_k\to 3[\overline{1}]+1=3\frac{1+\sqrt{5}}{2}+1=\frac{5+3\sqrt{5}}{2}=Z_\infty.
    \]
\end{proof}

\subsection{Proof of theorem \ref{asymptotics}}

For the sake of convenience we denote $\Lambda:=Z_\infty$. Once again,
\[
n:=2k-3, \quad u:=F_n,\quad v:=F_{n+1}.
\]

First we find the asymptotic formulas for $P_k,Z_k,Q_k$.

Let
\[
s:=\frac{1}{u^2},\quad r:=\frac{v}{u}.
\]
Dividing (\ref{Cassini}) by $u^2$ yields
\[
r^2-r-1=-s.
\]
Since $r>0$ we get
\[
r=\frac{1+\sqrt{5-4s}}{2}.
\]
Let $\varphi$ be the golden ratio. Then the Taylor expansion at point $s=0$ gives us
\begin{equation}\label{r}
    r=\varphi-\frac{s}{\sqrt{5}}+O(s^2).
\end{equation}

We first treat $Z_k$. Dividing the numerator and denominator in (\ref{Zk closed form}) by $u$ yields
\begin{equation}\label{z(r)}
    Z_k=z(r):=\frac{15+26r+3\sqrt{(1+2r)(21+34r)}}{2(3+4r)}.
\end{equation}
It can be calculated that 
\[
z(\varphi)=\Lambda, \quad z'(\varphi)=39\sqrt{5}-87.
\]
The Taylor expansion of $z(r)$ and (\ref{r}) yield
\begin{equation}\label{Zk Taylor}
    Z_k=z(\varphi)+z'(\varphi)(r-\varphi)+O((r-\varphi)^2)=\Lambda-\frac{39\sqrt{5}-87}{\sqrt{5}}s+O(s^2)=\Lambda-A_Zs+O(s^2).
\end{equation}

Now we consider $Q_k$. From
\[
T_1^{n+2}T_2
=
\begin{pmatrix}
3u+5v & u+2v\\
2u+3v & u+v
\end{pmatrix}.
\]
we get
\[
Q_k=1+3\frac{(3u+5v)\tau+u+2v}{(2u+3v)\tau+u+v}.
\]
Dividing by $u$ yields
\begin{equation}\label{q(r)}
    Q_k=q(r):=1+3\frac{(3+5r)\tau+1+2r}{(2+3r)\tau+1+r}.
\end{equation}
Calculations give
\[
q(\varphi)=\Lambda,\quad \frac{q'(\varphi)}{\sqrt{5}}=\frac{-117-21\sqrt{21}+9\sqrt{105}}{16}+\frac{273\sqrt{5}}{80}=:A_Q.
\]
Using Taylor expansion of (\ref{q(r)}) and (\ref{r}) we obtain
\begin{equation}\label{Qk Taylor}
    Q_k=q(\varphi)+q'(\varphi)(r-\varphi)+O((r-\varphi)^2)=\Lambda-A_Qs+O(s^2).
\end{equation}

Finally we deal with $P_k$. Dividing (\ref{Pk closed form}) by appropriate powers of $u$ we get $P_k$ as a function of $r,s$
\begin{equation}\label{p(r,s)}
P_k=p(r,s):=\frac{(1+3r)\tau+2r-1}{r\tau+r-1}\frac{(49+80r-44s)\tau+19+31r-17s}{(31+49r-13s)\tau+12+19r-5s}.
\end{equation}
Calculations show
\[
p(\varphi,0)=\Lambda,\quad \frac{p_r(\varphi,0)}{\sqrt{5}}-p_s(\varphi,0)=\frac{11+\sqrt{105}}{8}+\frac{\sqrt{21}}{4}-\frac{33\sqrt{5}}{20}=:A_P.
\]
The Taylor expansion of (\ref{p(r,s)}) and (\ref{r}) yields
\begin{equation}\label{Pk Taylor}
    P_k=p(r,s)=\Lambda+p_r(\varphi,0)(r-\varphi)+p_s(\varphi,0)s+O((r-\varphi)^2+s^2)=\Lambda-A_Ps+O(s^2).
\end{equation}

Now, considering $s=F_{2k-3}^{-2}$, we obtain
\[
Z_k-P_k=(A_P-A_Z)F_{2k-3}^{-2}+O(F_{2k-3}^{-4}),
\]
\[
Q_k-Z_k=(A_Z-A_Q)F_{2k-3}^{-2}+O(F_{2k-3}^{-4}),
\]
\[
Q_k-P_k=(A_P-A_Q)F_{2k-3}^{-2}+O(F_{2k-3}^{-4}).
\]
Simple calculations give
\[
A_P-A_Z=C_-,\quad A_Z-A_Q=C_+,\quad A_P-A_Q=C.
\]
And this yields (\ref{Zk-Pk}), (\ref{Qk-Zk}), (\ref{Qk-Pk}), which finishes proof of theorem \ref{asymptotics}.

Using this one can easily prove the following fact.
\begin{corollary}
    \[
    Z_k-P_k\sim 5C_-\varphi^{-4k+6},
    \]
    \[
    Q_k-Z_k\sim 5C_+\varphi^{-4k+6},
    \]
    \[
    Q_k-P_k\sim5C\varphi^{-4k+6}.
    \]
    Particularly,
    \[
    \frac{Z_k-P_k}{Q_k-Z_k}\to \frac{C_-}{C_+}.
    \]
\end{corollary}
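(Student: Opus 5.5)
The asymptotic formulas of Theorem \ref{asymptotics} give $Z_k-P_k=C_-F_{2k-3}^{-2}+O(F_{2k-3}^{-4})$, and similarly for the other two differences. So the plan is to replace $F_{2k-3}^{-2}$ by its exact asymptotic expression in $\varphi$ and check that the error terms are negligible. The first step is Binet's formula $F_m=(\varphi^m-\psi^m)/\sqrt5$ with $\psi=-\varphi^{-1}$. Since $|\psi|<1<\varphi$, this gives $F_m=\frac{\varphi^m}{\sqrt5}(1+O(\varphi^{-2m}))$, and hence
\[
F_{2k-3}^{-2}=5\varphi^{-2(2k-3)}\bigl(1+O(\varphi^{-4k})\bigr)=5\varphi^{-4k+6}\bigl(1+O(\varphi^{-4k})\bigr).
\]
The $O(F_{2k-3}^{-4})$ remainder in (\ref{Zk-Pk}) is $O(\varphi^{-8k})$, which is $o(\varphi^{-4k+6})$.

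Substituting this into (\ref{Zk-Pk}) gives $Z_k-P_k=5C_-\varphi^{-4k+6}(1+o(1))$. The same substitution in (\ref{Qk-Zk}) and (\ref{Qk-Pk}) gives the other two equivalences, with constants $5C_+$ and $5C$.

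The asymptotic equivalence $\sim$ is only meaningful here if the leading constants are nonzero. I would therefore check numerically that $C_->0$, $C_+>0$ and $C>0$. With $\sqrt5\approx2.23607$, $\sqrt{21}\approx4.58258$ and $\sqrt{105}\approx10.24695$:
\[
C_-\approx\frac{-301+281.7443+9.1652+10.2470}{8}\approx0.0196,
\]
\[
C_+\approx\frac{3(247+32.0780-248.2036-30.7409)}{16}\approx0.0250,
\]
and $C=C_-+C_+\approx0.0446$. The identity $C=C_-+C_+$ is also consistent with the relations $A_P-A_Q=(A_P-A_Z)+(A_Z-A_Q)$, and it can be verified exactly by adding the surds. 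Positivity of $C_-$ is also consistent with $P_k<Z_k$ from Theorem \ref{isolated points}.

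Finally, the ratio statement follows by dividing the first two equivalences. Because $C_+\neq0$,
\[
\frac{Z_k-P_k}{Q_k-Z_k}=\frac{C_-+O(F_{2k-3}^{-2})}{C_++O(F_{2k-3}^{-2})}\to\frac{C_-}{C_+}.
\]
There is no substantial obstacle in this argument. The only step that needs care is confirming that the constants are nonzero, which is the numerical check above. Without it, the $O$ terms could dominate and the equivalences would fail.
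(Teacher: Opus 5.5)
Your proposal is correct and follows the paper's own argument: substitute Binet's formula into the expansions of Theorem~\ref{asymptotics}, note that the $O(F_{2k-3}^{-4})$ remainder is negligible, and divide the first two equivalences to get the ratio. Your explicit check that $C_-$, $C_+$, $C$ are nonzero (indeed positive, with $C=C_-+C_+$ exactly) is needed for $\sim$ to be meaningful, and it fills in a point the paper leaves implicit.
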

\begin{proof}
    Using Binet's formula
    \[
    F_n=\frac{\varphi^n-\psi^n}{\sqrt{5}},
    \]
    where $\psi=\frac{1-\sqrt{5}}{2}$, we get
    \[
    Z_k-P_k=\frac{C_-}{\frac{\varphi^{2n}-2\varphi^n\psi^n+\psi^{2n}}{5}}+O\left(\frac{25}{(\varphi^n-\psi^n)^4}\right)\sim 5C_-\varphi^{-4k+6}.
    \]
    The other equations are proven similarly.
\end{proof}

\subsection{Proof of theorem \ref{equivalence classes}}

In this section we prove theorem \ref{equivalence classes}. We split the proof into 3 lemmas.

\begin{lemma}\label{global localization}
    Let $k\geq2$ and for some real $x$ $d(x)=Z_k$. If $k\neq3$, then
    \[
    x\in M(3,1).
    \]
    If $k=3$, then the only additional possibilities are
    \[
    x\in M(2,2),  \text{or} \ x\in M(4,1). 
    \]
    In these cases
    \[
    x\sim [\overline{2}], \text{or} \ x\sim [\overline{4,1}] 
    \]
    respectively.
\end{lemma}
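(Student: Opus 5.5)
We sort the irrational numbers $x$ by $a:=\limsup x_n$, together with the sets $M(a,b)$, and use the description of the low part of each $\mathcal{D}(a,b)$ quoted from \cite{Ivanov1978}. All $Z_k$ lie in the window $[Z_2,\Lambda)$, where $\Lambda=\frac{5+3\sqrt5}{2}=5.854\dots$. Since $Z_2=3[\overline{1,1,1,3}]+1>3+\sqrt5$, this window sits inside $(3+\sqrt5,\Lambda)$.

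\textbf{Large $a$.} If $a\ge 5$, then $d(x)\ge \min\mathcal{D}(a)=[\overline{a,1}]+1>a+1\ge 6>\Lambda$, so $d(x)\ne Z_k$. We also need $a\ge 2$, since $x\in M(1)$ gives $d(x)=\varphi^2+$\dots$=\sqrt5+$\dots, which is below the window: for the golden tail $d=[\overline{1}]^2$ corrected to $\varphi\cdot\varphi$, giving $\varphi^2<3+\sqrt5$. So $d(x)\notin$ window.

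\textbf{Case $a=4$.} Consider $M(4,b)$.
\begin{itemize}
\item If $b\ge2$, then $\min\mathcal{D}(4,b)=b[\overline{4,b}]+1\ge 2[\overline{4;2}]+1>9$, which is too large.
\item If $b=1$, then $\nu(4,1)=4[\overline{1;3}]+1=4\cdot\frac{3+\sqrt{21}}{6}\cdot$(computed) $>\Lambda$. Hence below $\nu(4,1)$ the only values are $e_n=d(x^{(n,4)})=4[\overline{(1,3)_n,1,4}]+1$, using (\ref{D(xna)}).
\end{itemize}
We then check which $e_n$ lie in the window. Already $e_1=6.045>\Lambda$, and $\{e_n\}$ is increasing, so only $e_0=3+2\sqrt2$ remains. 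We know $e_0=Z_3$; indeed $Z_3=3+2\sqrt2$, as stated in the introduction. This gives $x\sim x^{(0,4)}=[\overline{4,1}]$. The equivalence class is forced because Ivanov's description identifies the preimage, not just the value. If his statement only gives values, we argue directly: $d(x)=e_0<\nu(4,1)$ forces $x$ to end in $x^{(0,4)}$, by the same argument Ivanov uses.

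\textbf{Case $a=2$.} Here $d(x)\le \bar\nu(2,b)$. For $b=1$ this is $\bar\nu(2,1)=[\overline{2;1}][1;\overline{1,2}]$, and we bound $\mathcal{D}(2,1)$ above by $\max\mathcal{D}(2,1)=\bar\nu(2,1)\approx 2.73\cdot1.37<3+\sqrt5$, which is outside the window. For $b=2$, i.e. $M(2,2)$, we have $\nu(2,2)=2[2;1,\overline{1,2}]+1$, and one checks $\nu(2,2)>\Lambda$ (approximately $2\cdot2.58+1=6.16$). So below it only $d([\overline{2}])=[\overline{2}]^2=3+2\sqrt2=Z_3$ occurs, giving $x\sim[\overline2]$. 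Alternatively, one uses that $\bar\nu(2,2)=\max$ and shows $\mathcal{D}(2,2)$ is just that point plus values above $\nu(2,2)$.

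\textbf{Case $a=3$.} Consider $M(3,b)$.
\begin{itemize}
\item If $b=3$, then $\min\mathcal{D}(3,3)=3[\overline{3}]+1>10$.
\item If $b=2$, then $\min\mathcal{D}(3,2)=2[\overline{3,2}]+1\approx 7.9>\Lambda$.
\end{itemize}
So $b=1$ and $x\in M(3,1)$.

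\textbf{Main obstacle.} The difficulty is the bookkeeping of numerical comparisons of $\nu,\bar\nu,\min\mathcal{D}$ against the window. In particular, it requires care to justify that a value below $\nu(a,b)$ pins down the equivalence class of $x$, and not merely the value. For that step we would re-derive it from Ivanov's proof of the structure theorem: values below $\nu$ force the tail of $x$ to be eventually periodic with the stated period.
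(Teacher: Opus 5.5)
Your case analysis on $a=\limsup x_n$ and then on $b$ follows the paper's proof essentially step for step. It uses the same comparisons of $\nu(a,b)$, $\bar\nu(a,b)$ and $\min\mathcal{D}(a,b)$ with the window, and the same appeal to Ivanov for the uniqueness of the equivalence class at the minimal points. Two numerical slips do not affect the argument: for $a=1$ one simply has $d(x)=\varphi^2=\frac{3+\sqrt5}{2}$, and $[1;\overline{1,2}]=\sqrt3\approx1.73$ (not $1.37$), so $\bar\nu(2,1)=3+\sqrt3$, which is still below $3+\sqrt5$.
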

\begin{proof}
    Let $x\in M(a)$ for some $a\geq 1$. We consider the possible values of $a$.

    Let $a=1$. Then
    \[
    d(x)=\frac{3+\sqrt{5}}{2}<Z_2.
    \]
    Therefore, $a\neq 1$

    Let $a=2$. Using beforementioned results we get
    \[
    \max \mathcal{D}(2,1)=\bar{v}(2,1)=[\overline{2;1}][1;\overline{1,2}]=3+\sqrt{3}<Z_2,
    \]
    and
    \[
    v(2,2)=2[2;1,\overline{1,2}]+1=5+\frac{2}{\sqrt{3}}>6.
    \]
    Hence the only possibility in this case is
    \[
    x\sim [\overline{2}].
    \]
    For this $x$ we have
    \[
    d(x)=3+2\sqrt{2}=Z_3.
    \]

    Let $a=3$. For $b\geq2$ we have
    \[
    \min \mathcal{D}(3,b)=b[\overline{3,b}]+1\geq 3b+1\geq 7.
    \]
    Thus, if $x\in M(3)$ then $x\in M(3,1)$.

    Let $a=4$. Then if $b\geq2$, we once again have
    \[
    \min \mathcal{D}(4,b)\geq 4b+1\geq 9.
    \]
    For $b=1$ we get
    \[
    v(4,1)=4[\overline{1,3}]+1=\frac{9+2\sqrt{21}}{3}>6.
    \]
    For the increasing discrete sequence below $v(4,1)$ we have, using (\ref{D(xna)})
    \[
    D(x^{(0,4)})=D([\overline{4,1}])=3+2\sqrt{2}=Z_3,
    \]
    \[
    D(x^{(1,4)})=\frac{59+4\sqrt{195}}{19}>6.
    \]
    Therefore, in this case the only point coming from $M(4,1)$ is $Z_3$, and in this case $x\sim [\overline{4,1}]$, since as stated in \cite{Ivanov1978}, the minimal points have only one equivalence class.

     Finally we consider the case $a\geq 5$. Using (\ref{min D(a)}) we get
    \[
    \min \mathcal{D}(a)=[\overline{a,1}]+1\geq a+1\geq6.
    \]
    And so $a$ cannot be bigger than $4$. This concludes the proof of the lemma.
\end{proof}

\begin{lemma}\label{localization inside}
    Let
    \[
    x\in M(3,1), \quad d(x)=Z_k.
    \]
    Then
    \[
    x\in M_{k,0}.
    \]
\end{lemma}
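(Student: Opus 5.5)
We plan to combine Theorem~\ref{Ivanov} with the separation results of this section. The starting point is the range of $Z_k$. Since $Z_2\le Z_k<Z_\infty=\frac{5+3\sqrt5}{2}$ and $Z_2>3+\sqrt5$, the value $Z_k$ lies in the interval $\left(3+\sqrt5,\frac{5+3\sqrt5}{2}\right)$. Hence part~1 of Theorem~\ref{Ivanov} applies, and $Z_k$ belongs to $\mathfrak{m}_{1,1}\cup\bigcup_{j\ge2}(\mathfrak{m}_{j,0}\cup\mathfrak{m}_{j,1})$.

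We would strengthen this slightly. For $x\in M(3,1)$ with $d(x)$ in this interval, we take Ivanov's decomposition to say that $x$ lies in $M_{1,1}\cup\bigcup_{j}(M_{j,0}\cup M_{j,1})$. This is how his part~1 is proved, because the $M_{j,\omega}$ partition the relevant part of $M(3,1)$. Therefore $x\in M_{j,\omega}$ for some $(j,\omega)$, and then $Z_k=d(x)\in\mathfrak{m}_{j,\omega}$. It remains to show that this forces $(j,\omega)=(k,0)$.

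For that we use the chain in part~2 of Theorem~\ref{Ivanov} together with the inequalities established above. These are $P_j<Z_j<Q_j$ for all $j\ge 3$ (Theorem~\ref{isolated points} and its proof for $j\ge4$, and Ivanov for $j=3$), and the observation that $Z_j$ is strictly increasing.

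The chain says that $\mathfrak{m}_{k-1,1}\prec\tilde{\mathfrak{m}}_{k,0}\prec\mathfrak{m}_{k,1}$. Also $\sup\mathfrak{m}_{k-1,1}=P_k<Z_k=\min\mathfrak{m}_{k,0}<Q_k=\min\tilde{\mathfrak{m}}_{k,0}$. So every set listed before $\mathfrak{m}_{k,0}$ has supremum at most $P_k<Z_k$. Every set listed after $\tilde{\mathfrak{m}}_{k,0}$ has infimum larger than $\sup\tilde{\mathfrak{m}}_{k,0}\ge Q_k>Z_k$. The sets $\tilde{\mathfrak{m}}_{j,0}$ with $j\neq k$ are likewise separated from $Z_k$ by the chain. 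In particular, all of $\mathfrak{m}_{j,0}$ for $j>k$ lies strictly above $\mathfrak{m}_{k,1}$, and we have $Z_{j}>Z_k$.

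The case $k=2$ needs separate handling, because $P_2$ is not defined. There the chain gives $\mathfrak{m}_{1,1}\prec\mathfrak{m}_{2,0}\prec\mathfrak{m}_{2,1}$ directly, so $Z_2=\min\mathfrak{m}_{2,0}$ is not in $\mathfrak{m}_{1,1}$ or $\mathfrak{m}_{2,1}$. The only set containing $Z_k$ is therefore $\mathfrak{m}_{k,0}$, and so $x\in M_{k,0}$.

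The main obstacle is the bookkeeping step from values back to sets. We must ensure that $d(x)=Z_k$ together with $x\in M(3,1)$ really places $x$ in some $M_{j,\omega}$, rather than only placing $d(x)$ in some $\mathfrak{m}_{j,\omega}$. We also have to handle the boundary sets, where the chain $\prec$ is stated for $\tilde{\mathfrak{m}}$ rather than for $\mathfrak{m}$. For the first point we rely on Ivanov's construction, in which the $M_{j,\omega}$ exhaust the elements of $M(3,1)$ with Dirichlet value below $Z_\infty$. For the second point, the only element removed from $\mathfrak{m}_{j,0}$ is $Z_j$ itself. We then use $Z_j\ne Z_k$ for $j\ne k$, which holds because $Z_j$ is strictly increasing.
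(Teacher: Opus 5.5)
\textbf{This is the paper's route, but there is a gap at $k=3$.} Like the paper, you place $x$ in some $M_{j,\omega}$ via Ivanov's decomposition. You then use the ordering of the sets $\mathfrak{m}_{j,\omega}$, together with $P_k<Z_k$ and the monotonicity of $Z_j$, to force $(j,\omega)=(k,0)$. For $k=2$ and for $k\ge 4$ this works, including your handling of the tilde sets and the removed points $Z_j$.

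For $k=3$ you use $P_3<Z_3$ and attribute it to Ivanov. The paper only credits Ivanov with the \emph{isolation} of $Z_3$, which does not imply $Z_3\notin\mathfrak{m}_{2,1}$. The paper's own proof of $P_k<Z_k$ needs $u=F_{2k-3}\ge 5$, so it covers only $k\ge4$.

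Worse, the inequality is false at $k=3$. With $u=F_3=2$ and $v=F_4=3$ the paper's closed forms give $N_1/D_1=(27-\sqrt{21})/6\approx 3.7362$ and $N_2/D_2=(632\tau+245)/(405\tau+157)\approx 1.5605$. Hence $P_3\approx 5.8304>Z_3=3+2\sqrt2\approx 5.8284$. Equivalently, $E_1(2,3)<0$ and $E_0(2,3)<0$ in (\ref{Hk(Pk)}), so $H_3(P_3)>0$. Thus $\sup\mathfrak{m}_{2,1}>Z_3$, and the ordering alone cannot rule out some $x\in M_{2,1}$ with $d(x)=Z_3$.

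For the same reason, your side remark that all of $\mathfrak{m}_{j,0}$, $j>k$, lies strictly above $\mathfrak{m}_{k,1}$ fails for $k=2$, $j=3$. That one is harmless, since there you only need $Z_3>Z_2$.

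To finish $k=3$ you need a separate argument that $3+2\sqrt2\notin\mathfrak{m}_{2,1}$. This could be a direct analysis of the sequences in $M_{2,1}$ with Dirichlet value near $3+2\sqrt2$, or a precise citation of Ivanov's description of the preimages of $Z_3$.

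The paper's one-line proof has the same defect. It invokes the chain $\mathfrak{m}_{2,1}\prec\mathfrak{m}_{3,0}$, which the paper never proves (it establishes $P_k<Z_k$ only for $k\ge 4$) and which the computation above contradicts.
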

\begin{proof}
    Ivanov's proof in \cite{Ivanov1982} shows that if $x\in M(3,1)$ and $d(x)<\Lambda$, then $x$ belongs to one of the following sets
    \[
    M_{1,1},\quad M_{j,0}, M_{j,1} \ (j\geq 2).
    \]
    In this article we proved that
    \[
    \mathfrak{m}_{1,1}\prec\mathfrak{m}_{2,0}\prec\mathfrak{m}_{2,1}\prec\mathfrak{m}_{3,0}\prec\dots
    \]
    The proof trivially follows from this fact.
\end{proof}

\begin{lemma}\label{rigidity}
    Let
    \[
    x\in M_{k,0}, \quad d(x)=Z_k.
    \]
    Then
    \[
    x\sim [\overline{(1)_{2k-1},3}].
    \]
\end{lemma}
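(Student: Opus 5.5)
We need to show that any $x\in M_{k,0}$ with $d(x)=Z_k$ is eventually periodic with period $(1)_{2k-1},3$. The sets $M_{k,0}$ are not defined explicitly in this paper. We therefore rely on Ivanov's description in \cite{Ivanov1982}. As we read it, $x\in M_{k,0}$ means that $x\in M(3,1)$, that from some point on every maximal run of $1$'s between consecutive $3$'s has length at least $2k-1$, and that runs of length exactly $2k-1$ occur infinitely often. Here runs of even length $<2k-1$, or blocks containing $2$, occur only finitely often or are excluded by the definition. We will make this description precise from \cite{Ivanov1982} before using it.

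The plan mirrors the proof of Lemma~\ref{max lemma}, combined with Lemma~\ref{two-sided sequence lemma}. First, pass to the two-sided sequence $\mathbf{a}\in\Sigma$ given by Lemma~\ref{two-sided sequence lemma}. It satisfies $\lambda_j(\mathbf{a})\le Z_k$ for all $j$ and $\lambda_0(\mathbf{a})=Z_k$, and it inherits the $M_{k,0}$ structure. So $\mathbf{a}$ consists of $3$'s separated by runs of $1$'s of length $\ge 2k-1$, possibly with a $2$ adjacent only to $1$'s.

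Next, examine the position $j$ of a $3$ followed by a run $(1)_{2k-1}$. We have $\lambda_j=3[\,\text{back}\,]\cdot[1;(1)_{2k-2},3,\dots]$ up to the normalization, which gives $Z_k=3y+1$ when the sequence is exactly periodic. The standard comparison rule for continued fractions (alternating monotonicity in each partial quotient) then shows the following. Any deviation from the periodic pattern $\overline{(1)_{2k-1},3}$ on either side of a maximizing position, namely a longer run of $1$'s at an appropriate parity, or a $2$ in place of a $1$, strictly increases one of the two factors. Meanwhile the other factor can only be compensated in a later digit. Comparing digit by digit, we aim to show that if $\mathbf{a}$ differs from the periodic word at some first index, then some nearby $\lambda_{j'}$ exceeds $Z_k$, contradicting $\lambda_{j'}\le Z_k$. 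Or it forces $\lambda_0<Z_k$, contradicting $\lambda_0=Z_k$. Thus $\mathbf{a}$ is exactly the two-sided periodic word.

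Finally, transfer this back to $x$. Suppose $x$ contained infinitely many deviations, i.e.\ blocks differing from the periodic pattern. Apply the compactness argument of Lemma~\ref{two-sided sequence lemma}, centred near those deviations rather than at a maximizing sequence. This produces a limit sequence $\mathbf{b}$ with $\sup_j\lambda_j(\mathbf{b})\le Z_k$ that still contains a deviation. It satisfies either $\sup\lambda_j(\mathbf{b})=Z_k$, which contradicts the previous step, or $<Z_k$. The second case is impossible because $Z_k=\min\mathfrak{m}_{k,0}$ and $\mathbf{b}$ still has the $M_{k,0}$ structure. Hence only finitely many deviations occur, and $x\sim[\overline{(1)_{2k-1},3}]$.

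The main obstacle is the middle step. It requires a careful parity-sensitive comparison showing that every local deviation raises some $\lambda_{j'}$ strictly above $Z_k$. In particular, a run of length $2k$ or longer on one side must be handled, since it lowers one factor. We would handle this with the explicit matrix forms $T_1^{m}T_3$ and the Cassini identity (\ref{Cassini}), as in the computation of $Z_k$, reducing each case to a sign check of $H_k$ via (\ref{Hk<0}).
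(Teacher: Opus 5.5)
Your proposal has a genuine gap. The step that carries all the content is only announced (``we aim to show''), and you flag it yourself as the main obstacle: that in an element of $M_{k,0}$ every departure from $\overline{(1)_{2k-1},3}$ pushes some $\lambda_{j'}$ strictly above $Z_k$.

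That step also rests on a guessed definition of $M_{k,0}$ which the paper's own data contradict. $Q_k=\min\tilde{\mathfrak m}_{k,0}=3[(1)_{2k-1},2,\overline{1,3}]+1$ is the value of the symmetric configuration $\dots,3,1,3,1,2,(1)_{2k-1},3,(1)_{2k-1},2,1,3,1,3,\dots$. This indicates that elements of $M_{k,0}$ may contain $2$'s and infinitely many runs of $1$'s shorter than $2k-1$ (such as $3,1,3$), contrary to your description.

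The case analysis you would need is exactly what Ivanov carried out. It has to handle $2$'s inside runs, runs of both parities, and compensation between the two factors, uniformly in $k$. The tails are arbitrary admissible ones, so they need interval bounds, not a single sign check of $H_k$.

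The paper does not redo any of this. It quotes Ivanov's Proposition~1 in \cite{Ivanov1982}: if $x\in M_{k,0}$ and $d(x)<Q_k$, then $x\sim[\overline{(1)_{2k-1},3}]$. It then only needs $Z_k<Q_k$. This is immediate because $Q_k$ is the minimum of $\mathfrak m_{k,0}$ with $Z_k$ removed. Alternatively, the expansions in (\ref{Zk}) and (\ref{Qk}) first differ at the odd index $2k-1$, where $3>2$. The threshold $Q_k$ is precisely the uniform gap your argument would have to produce.

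Your transfer-back step also does not close. Because $d(x)$ is a $\limsup$, you need $d_n(x)\ge Z_k+\delta$ near every far-out deviation, with $\delta>0$ independent of its position. Your limit sequence $\mathbf b$ only satisfies $\sup_j\lambda_j(\mathbf b)\le Z_k$.

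If equality holds, the supremum need not be attained. Then the alternative ``$\lambda_0<Z_k$, contradicting $\lambda_0=Z_k$'' from your middle step is unavailable.

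If $\sup_j\lambda_j(\mathbf b)<Z_k$, appealing to $Z_k=\min\mathfrak m_{k,0}$ requires the right half of $\mathbf b$ to define an element of $M_{k,0}$. A compactness limit centred at deviations does not guarantee this.

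These membership constraints are essential. For $k\ge 3$, the two-sided periodic word $\overline{(1)_{2k-3},3}$ deviates from the pattern everywhere, yet $\lambda_j\le Z_{k-1}<Z_k$ for all $j$. So ``every deviation raises some $\lambda_{j'}$ above $Z_k$'' is false unless you use the precise structure of $M_{k,0}$, which you do not have.
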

\begin{proof}
    From Ivanov's proposition $1$ in \cite{Ivanov1982} we get that if $x\in M_{k,0}$ and $d(x)<Q_k$, then $x\sim[\overline{(1)_{2k-1},3}]$. The proof follows immediately, since $d(x)=Z_k<Q_k$.
\end{proof}

Now we prove theorem \ref{equivalence classes}.
\begin{proof}[Proof of Theorem \ref{equivalence classes}]
    Let $d(x)=Z_k$ for some real $x$. If $k\neq3$, then $x\in M(3,1)$ by lemma \ref{global localization}. Now by lemma \ref{localization inside} we get $x\in M_{k,0}$. Finally by lemma \ref{rigidity} we have $x\sim[\overline{(1)_{2k-1},3}]$.

    Now consider the case $k=3$. By lemma \ref{global localization} we have $x\in M(3,1)$, or $x\in M(2,2)$, or $x\in M(4,1)$. The first case gives us
    \[
    x\sim[\overline{(1)_5,3}]
    \]
    by lemmas \ref{localization inside} and \ref{rigidity}. And the other two cases give us
    \[
    x\sim [\overline{2}], \text{or} \ x\sim [\overline{4,1}]
    \]
    respectively by lemma \ref{global localization}. This concludes the proof.
\end{proof}

\appendix
\section{Source code for the finite search}
\label{app:code}

The following Python program was used for the exact finite search
appearing in the proof of Theorem~\ref{ray lower bound}.
All arithmetic in $\mathbb Q(\sqrt{21})$ is exact.

% (lstinputlisting) main.py
\begin{lstlisting}[
    language=Python,
    basicstyle=\ttfamily\footnotesize,
    breaklines=true,
    numbers=left,
    numberstyle=\tiny
]
from fractions import Fraction

class Q21:
    def __init__(self, a, b):
        self.a = a
        self.b = b

    def __add__(self, other):
        return Q21(self.a + other.a, self.b + other.b)

    def __sub__(self, other):
        return Q21(self.a - other.a, self.b - other.b)

    def __neg__(self):
        return Q21(-self.a, -self.b)

    def __mul__(self, other):
        return Q21(self.a * other.a + 21 * self.b * other.b, self.b * other.a + self.a * other.b)

    def __truediv__(self, other):
        d = other.a ** 2 - 21 * other.b ** 2
        x = self * Q21(other.a, -other.b)
        return Q21(x.a/d, x.b/d)

    def __le__(self, other):
        x = other - self
        if x.a >= 0 and x.b >= 0:
            return True
        if x.a <= 0 and x.b <= 0:
            return False
        if x.a < 0 and x.b > 0:
            return 21 * x.b ** 2 >= x.a ** 2
        if x.a > 0 and x.b < 0:
            return x.a ** 2 >= 21 * x.b ** 2


m1 = Q21(Fraction(3, 6), Fraction(1, 6))
M1 = Q21(Fraction(-1, 2), Fraction(1, 2))
m2 = Q21(Fraction(19, 10), Fraction(1, 10))
M2 = Q21(Fraction(1, 2), Fraction(1, 2))
m3 = Q21(Fraction(31, 10), Fraction(1, 10))
M3 = Q21(Fraction(3, 2), Fraction(1, 2))
L = Q21(Fraction(7), Fraction(-8, 35))
R = Q21(Fraction(381780528649, 64088662090), Fraction(35642709, 64088662090))

def min_interval(d):
    if d == 1:
        return [m1, M3]
    elif d == 2:
        return [m1, M2]
    elif d == 3:
        return [m1, M1]

def get_interval_of_word(u):
    m = len(u)
    interval = min_interval(u[m-1])
    for i in range(m):
        interval[0], interval[1] = (
            Q21(1, 0) / interval[1] + Q21(u[m - 1 - i], 0),
            Q21(1, 0) / interval[0] + Q21(u[m - 1 - i], 0)
        )
    return interval

def get_lower_lambda(w, p):
    lower_interval = get_interval_of_word(w[0:(p + 1)][::-1])
    if p < len(w) - 1:
        upper_interval = get_interval_of_word(w[p + 1:])
    else:
        upper_interval = min_interval(w[-1])
    lower_lambda = lower_interval[0] * upper_interval[0]
    return lower_lambda

def get_upper_lambda(w, p):
    lower_interval = get_interval_of_word(w[0:(p + 1)][::-1])
    if p < len(w) - 1:
        upper_interval = get_interval_of_word(w[p + 1:])
    else:
        upper_interval = min_interval(w[-1])
    upper_lambda = lower_interval[1] * upper_interval[1]
    return upper_lambda

def does_word_survive(w):
    if get_upper_lambda(w, int(len(w) / 2 - 1)) <= L:
        return False
    else:
        for p in range(len(w)):
            if get_lower_lambda(w, p) >= R:
                return False
    return True

def get_allowed_digits(d):
    if d == 1:
        return [1, 2, 3]
    elif d == 2:
        return [1, 2]
    elif d == 3:
        return [1]

def get_survivable_words(r):
    survivable_words = []
    candidates = [[1, 1], [1, 2], [1, 3], [2, 1], [2, 2], [3, 1]]
    candidates_temp = []
    print('2r', 'num of candidates', '|S_r|', 'survived words')
    for k in range(1, r + 1):
        for candidate in candidates:
            if does_word_survive(candidate):
                survivable_words.append(candidate)
        print(2 * k, len(candidates), len(survivable_words), survivable_words)
        candidates_temp = survivable_words.copy()
        survivable_words = []
        candidates = []
        for temp_candidate in candidates_temp:
            left_digits = get_allowed_digits(temp_candidate[0])
            right_digits = get_allowed_digits(temp_candidate[-1])
            for left_digit in left_digits:
                for right_digit in right_digits:
                    temp = temp_candidate.copy()
                    temp.insert(0, left_digit)
                    temp.append(right_digit)
                    candidates.append(temp)

get_survivable_words(17)
\end{lstlisting}

\section*{AI assistance}

The proofs of the results presented in this manuscript were developed in a dialogue with OpenAI's ChatGPT, primarily using GPT-5.6 Thinking. The model was first provided with the relevant literature concerning the origin of the ray in the one-dimensional Dirichlet spectrum and was asked to explore possible improvements of the known lower and upper bounds. It subsequently proposed the improvements developed in this paper and generated the corresponding proofs.

The model was then asked what further results could be obtained concerning isolated points of the Dirichlet spectrum. It proposed investigating whether all of Ivanov's points \(Z_k\) are isolated, determining asymptotic formulas for the lengths of the corresponding gaps, and describing the equivalence classes associated with each \(Z_k\). It subsequently generated proofs of these results.

The author carefully and independently checked all arguments, including the logical steps and calculations in the proofs. The Python code presented in the appendix was written independently by the author and was used to verify the computer-assisted part of the lower-bound argument. The author made the final decisions concerning the statements, proofs, organization, and presentation of the results and takes full responsibility for the content of the paper.

\end{document}